\documentclass[a4paper,10pt]{amsart}
\usepackage[T1]{fontenc}
\usepackage[english]{babel}
\usepackage{amsmath,amsthm}
\usepackage{amsfonts}
\usepackage{comment}
\usepackage{enumerate}
\usepackage{graphicx}
\usepackage{hyperref}
\usepackage{subcaption}
\usepackage{color}
\usepackage[all]{xy}
\usepackage{thmtools, thm-restate}
\usepackage{overpic}
\usepackage{amssymb,tikz}
\usepackage{enumitem}
\usepackage{multicol}
\usepackage{alltt}

\usetikzlibrary{decorations.pathreplacing}

\usepackage[top=1.23in, bottom=1.22in, left=1.23in, right=1.23in]{geometry}

\newtheorem{thm}{Theorem}[section]
\newtheorem{cor}[thm]{Corollary}
\newtheorem{lem}[thm]{Lemma}

\theoremstyle{definition}
\newtheorem{defn}[thm]{Definition}

\newtheorem{ques}{Question}

\theoremstyle{remark}
\newtheorem{rem}[thm]{Remark}

\numberwithin{equation}{section}

\newcommand{\spin}{\ifmmode{\rm Spin}\else{${\rm spin}$\ }\fi}
\newcommand{\spinc}{\ifmmode{{\rm Spin}^c}\else{${\rm spin}^c$}\fi}

\newcommand{\Z}{\mathbb{Z}}

\newcommand{\plumb}{\entrymodifiers={+[o][F-]} \xymatrix@C=8pt}
\newcommand{\el}{\ar@{-}[r]}
\newcommand{\ed}{\ar@{..}[r] }

\begin{document}

\title{Minimum $b_2$ of symplectic fillings of lens spaces}%

\author{Antony T.H. Fung}%
\address {Seoul National University}
\email{thf25@cantab.ac.uk}

\date{\today}%

\begin{abstract}
We classify lens spaces for which the canonical negative-definite plumbing has minimal $b_2$ among all symplectic fillings of the standard contact structure. The classification is given by 4 infinite families of ``forbidden subgraphs'' in a manner similar to Aceto-McCoy-Park's work on smooth negative-definite fillings of lens spaces. In particular, no classification of this form can be given using a finite list of forbidden configurations, highlighting a difference between symplectic fillings and smooth negative-definite fillings of lens spaces.
\end{abstract}
\maketitle

\section{Introduction}

The interaction between 3-manifolds and 4-manifolds is a central topic in low-dimensional topology. Given a fixed 3-manifold $Y$, one often asks which 4-manifolds $X$ have boundary $Y$. Lens spaces form a class of 3-manifolds that has been extensively studied. There have been numerous studies of which 4-manifolds $X$, under various restrictions, are bounded by lens spaces. Some examples of such restrictions include: Smooth and definite \cite{definite}, being a smooth rational homology ball \cite{Lisca:2007-1}, smooth and simply connected \cite{JoParkPark_AGT}, symplectic \cite{Lisca:2008-1,bhupal2016symplectic,Fossati:2020-1,Etnyre-Roy:2021-1,ChristianLi}, symplectic and not simply connected \cite{Aceto-McCoy-Park:2020-1}, built from a 2-handle \cite{Greene:2013-1}, built from a 2-handle and a cobordism \cite{Fung2024}, and with restrictions coming from algebraic geometry \cite{JoParkPark_Trans,jo2026algebraicmontgomeryyangproblem}.

Throughout this paper, we use the convention that the lens space $L(p,q)$ is obtained by $-p\slash q$-surgery on the unknot in $S^3$. We write $\xi_{\mathrm{st}}$ to denote the standard contact structure on $L(p,q)$, which is induced by the standard contact structure on $S^3$ under the quotient description of $L(p,q)$. The lens space $L(p,q)$ bounds the canonical negative-definite plumbing $X(p,q)$ which is obtained by plumbing disk bundles over spheres along the weighted graph
$$\begin{tikzpicture}[xscale=1.0,yscale=1,baseline={(0,0)}]
    \node at (0.9,0.4) {$-a_1$};
    \node at (1.9,0.4) {$-a_2$};
    \node at (3.9,0.4) {$-a_k$};
    \node (A1) at (1, 0) {$\bullet$};
    \node (A2) at (2, 0) {$\bullet$};
    \node (A3) at (3, 0) {$\cdots$};
    \node (A4) at (4, 0) {$\bullet$};
    \path (A1) edge [-] node [auto] {$\scriptstyle{}$} (A2);
    \path (A2) edge [-] node [auto] {$\scriptstyle{}$} (A3);
    \path (A3) edge [-] node [auto] {$\scriptstyle{}$} (A4);
\end{tikzpicture},$$
where $a_i\ge 2$ comes from the Hirzebruch-Jung expansion of $p\slash q$:
$$\dfrac{p}{q}= a_1 - \cfrac{1}{a_2 - \cfrac{1}{\ddots - \cfrac{1}{a_k}}}.$$
The manifold $X(p,q)$ is a smooth negative-definite filling of $L(p,q)$. It admits a symplectic structure that fills $(L(p,q),\xi_{\mathrm{st}})$.

Recall that given a graph, an induced subgraph is a graph obtained by taking a subset of the vertices and all edges that connect pairs	of these chosen vertices. In \cite{definite}, Aceto-McCoy-Park studied which lens spaces (and their connected sums) satisfy the property that the canonical negative-definite plumbing has minimal second Betti number $b_2$ among all smooth negative-definite fillings. They concluded that this property is equivalent to the combinatorial statement that the plumbing graph above does not contain any of the 10 ``forbidden configurations'' they wrote down as an induced subgraph.

A natural question to ask is whether a similar classification exists in the symplectic category. Is there a list of configurations such that the canonical negative-definite plumbing having the minimal $b_2$ among all symplectic fillings of the lens space is equivalent to the plumbing graph not containing any of those configurations as an induced subgraph?

In this paper, we provide such classification in the symplectic category by explicitly writing down a list of configurations with this property. To state the main theorem of this paper, we first define the following families of weighted graphs:

\begin{align*}
A_k&:=\begin{tikzpicture}[xscale=1.0,yscale=1,baseline={(0,0)}]
    \node at (0.9,0.4) {$-(4+k)$};
    \node at (1.9,0.4) {$-2$};
		\node at (3.9,0.4) {$-2$};
    \node (A1) at (1,0) {$\bullet$};
    \node (A2) at (2,0) {$\bullet$};
		\node (A3) at (3,0) {$\cdots$};
		\node (A4) at (4,0) {$\bullet$};
    \path (A1) edge [-] node [auto] {$\scriptstyle{}$} (A2);
		\path (A2) edge [-] node [auto] {$\scriptstyle{}$} (A3);
		\path (A3) edge [-] node [auto] {$\scriptstyle{}$} (A4);
		\draw [decorate,decoration={brace,amplitude=5pt,mirror,raise=2ex}] (2,0) -- (4,0) node[midway,yshift=-2em]{k};
  \end{tikzpicture} \\
B_k&:=\begin{tikzpicture}[xscale=1.0,yscale=1,baseline={(0,0)}]
    \node at (0.9,0.4) {$-3$};
    \node at (1.9,0.4) {$-2$};
		\node at (3.9,0.4) {$-2$};
		\node at (4.9,0.4) {$-3$};
    \node (A1) at (1,0) {$\bullet$};
    \node (A2) at (2,0) {$\bullet$};
		\node (A3) at (3,0) {$\cdots$};
		\node (A4) at (4,0) {$\bullet$};
		\node (A5) at (5,0) {$\bullet$};
    \path (A1) edge [-] node [auto] {$\scriptstyle{}$} (A2);
		\path (A2) edge [-] node [auto] {$\scriptstyle{}$} (A3);
		\path (A3) edge [-] node [auto] {$\scriptstyle{}$} (A4);
		\path (A4) edge [-] node [auto] {$\scriptstyle{}$} (A5);
		\draw [decorate,decoration={brace,amplitude=5pt,mirror,raise=2ex}] (2,0) -- (4,0) node[midway,yshift=-2em]{$k$};
  \end{tikzpicture} \\
C_{k,s}&:=\begin{tikzpicture}[xscale=1.0,yscale=1,baseline={(0,0)}]
    \node at (0.9,0.4) {$-(5+k)$};
    \node at (1.9,0.4) {$-2$};
		\node at (3.9,0.4) {$-2$};
		\node at (4.9,0.4) {$-3$};
		\node at (5.9,0.4) {$-2$};
		\node at (7.9,0.4) {$-2$};
    \node (A1) at (1,0) {$\bullet$};
    \node (A2) at (2,0) {$\bullet$};
		\node (A3) at (3,0) {$\cdots$};
		\node (A4) at (4,0) {$\bullet$};
		\node (A5) at (5,0) {$\bullet$};
		\node (A6) at (6,0) {$\bullet$};
		\node (A7) at (7,0) {$\cdots$};
		\node (A8) at (8,0) {$\bullet$};
    \path (A1) edge [-] node [auto] {$\scriptstyle{}$} (A2);
		\path (A2) edge [-] node [auto] {$\scriptstyle{}$} (A3);
		\path (A3) edge [-] node [auto] {$\scriptstyle{}$} (A4);
		\path (A4) edge [-] node [auto] {$\scriptstyle{}$} (A5);
		\path (A5) edge [-] node [auto] {$\scriptstyle{}$} (A6);
		\path (A6) edge [-] node [auto] {$\scriptstyle{}$} (A7);
		\path (A7) edge [-] node [auto] {$\scriptstyle{}$} (A8);
		\draw [decorate,decoration={brace,amplitude=5pt,mirror,raise=2ex}] (2,0) -- (4,0) node[midway,yshift=-2em]{$s$};
		\draw [decorate,decoration={brace,amplitude=5pt,mirror,raise=2ex}] (6,0) -- (8,0) node[midway,yshift=-2em]{$k+2$};
  \end{tikzpicture} \\
D_{k,s}&:=\begin{tikzpicture}[xscale=1.0,yscale=1,baseline={(0,0)}]
    \node at (0.9,0.4) {$-(2+s)$};
    \node at (1.9,-0.4) {$-(5+k)$};
		\node at (2.9,0.4) {$-2$};
		\node at (4.9,0.4) {$-2$};
		\node at (5.9,0.4) {$-3$};
		\node at (6.9,0.4) {$-2$};
		\node at (8.9,0.4) {$-2$};
    \node (A1) at (1,0) {$\bullet$};
    \node (A2) at (2,0) {$\bullet$};
		\node (A3) at (3,0) {$\bullet$};
		\node (A4) at (4,0) {$\cdots$};
		\node (A5) at (5,0) {$\bullet$};
		\node (A6) at (6,0) {$\bullet$};
		\node (A7) at (7,0) {$\bullet$};
		\node (A8) at (8,0) {$\cdots$};
		\node (A9) at (9,0) {$\bullet$};
    \path (A1) edge [-] node [auto] {$\scriptstyle{}$} (A2);
		\path (A2) edge [-] node [auto] {$\scriptstyle{}$} (A3);
		\path (A3) edge [-] node [auto] {$\scriptstyle{}$} (A4);
		\path (A4) edge [-] node [auto] {$\scriptstyle{}$} (A5);
		\path (A5) edge [-] node [auto] {$\scriptstyle{}$} (A6);
		\path (A6) edge [-] node [auto] {$\scriptstyle{}$} (A7);
		\path (A7) edge [-] node [auto] {$\scriptstyle{}$} (A8);
		\path (A8) edge [-] node [auto] {$\scriptstyle{}$} (A9);
		\draw [decorate,decoration={brace,amplitude=5pt,mirror,raise=2ex}] (3,0) -- (5,0) node[midway,yshift=-2em]{$k$};
		\draw [decorate,decoration={brace,amplitude=5pt,mirror,raise=2ex}] (7,0) -- (9,0) node[midway,yshift=-2em]{$s$};
		\end{tikzpicture}
\end{align*}

Let
\begin{align*}
A&:=\{A_k \mid k\ge 0\}, \\
B&:=\{B_k \mid k\ge 0\}, \\
C&:=\{C_{k,s} \mid k,s\ge 0\}, \\
D&:=\{D_{k,s} \mid k,s\ge 0\}.
\end{align*}

We also let
\begin{align*}
C'&:=\{C_{k,s} \mid k\ge s\ge 0\}\subset C, \\
D'&:=\{D_{k,s} \mid k,s\ge 0, \ k+1\ge s, \ s\neq 2, \ (k,s)\neq(0,0)\}\subset D.
\end{align*}

The main theorem of this paper is:

\begin{thm}\label{thm:main}
Let $X(p,q)$ be the canonical negative-definite plumbing of $L=L(p,q)$. The following are equivalent:
\begin{enumerate}[label=(\roman*), font=\upshape]
\item\label{it:min_filling} every symplectic filling $X$ of $(L,\xi_{\mathrm{st}})$ satisfies $b_2(X) \geq b_2(X(p,q))$;
\item\label{it:blow_up} every symplectic filling $X$ of $(L,\xi_{\mathrm{st}})$ is a blowup\footnote{Here and throughout, a sequence of blowups is allowed
to be empty.} of the canonical negative-definite plumbing;
\item\label{it:combinatorial} the canonical negative-definite linear plumbing graph associated to $L$ does not contain any configurations in $A\cup B\cup C'\cup D'$ as an induced subgraph.
\end{enumerate}
\end{thm}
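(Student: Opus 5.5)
We will use Lisca's classification of symplectic fillings of $(L(p,q),\xi_{\mathrm{st}})$. Write $p/(p-q)=[b_1,\dots,b_n]$, the dual continued fraction. Every minimal symplectic filling is of the form $W_{p,q}(\mathbf{n})$, where $\mathbf{n}=(n_1,\dots,n_n)$ runs over sequences with $0\le n_i\le b_i$ such that $[n_1,\dots,n_n]$ is an admissible continued fraction representing zero. Every filling is a blowup of a minimal one. Lisca's formula gives
$$b_2(W_{p,q}(\mathbf{n}))=\sum_i (b_i-n_i),$$
and the canonical plumbing corresponds to a distinguished choice of $\mathbf{n}$. Equivalently, by Riemenschneider's point diagram, $b_2(X(p,q))=k$ is the number of rows and $\sum_i(b_i-1)=k$.

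\textbf{\ref{it:blow_up}$\Rightarrow$\ref{it:min_filling}.} Blowing up only increases $b_2$, so this is immediate.

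\textbf{\ref{it:min_filling}$\Rightarrow$\ref{it:blow_up}.} Suppose \ref{it:blow_up} fails. Then there is a minimal filling $W\neq X(p,q)$. Both are minimal fillings of the same contact structure, so the task is to show $b_2(W)<b_2(X(p,q))$. I would not try to prove this directly. Instead I would close the loop by proving \ref{it:min_filling}$\Rightarrow$\ref{it:combinatorial}$\Rightarrow$\ref{it:blow_up}.

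\textbf{\ref{it:combinatorial}$\Rightarrow$\ref{it:blow_up} (contrapositive).} Suppose a minimal filling other than $X(p,q)$ exists. Its zero sequence $\mathbf{n}$ is obtained from $(1,2,\dots,2,1)$-type sequences by repeated blowups. The sequence $\mathbf{n}$ must differ from the canonical one. I would analyze where the two differ, using the combinatorics of blowing up zero continued fractions: $[\dots,a,b,\dots]\mapsto[\dots,a+1,1,b+1,\dots]$. The aim is to show that such a difference forces $b_i\ge n_i$ in a pattern which, translated through Riemenschneider duality, produces an induced subgraph in $A\cup B\cup C\cup D$ of the linear graph $[a_1,\dots,a_k]$.

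\textbf{\ref{it:min_filling}$\Rightarrow$\ref{it:combinatorial} (contrapositive).} Suppose the graph contains a configuration from $A\cup B\cup C'\cup D'$ as an induced subgraph. Then I would construct an explicit $\mathbf{n}$ with $\sum(b_i-n_i)<k$. For example, an $A_k$ subgraph contributes a dual string containing $[\dots,2,\dots]$ with a $k+3$-length block of $2$'s. In a zero sequence this allows a local pattern $(\dots,2,2,\dots)$ replacing the canonical $(\dots,1,\dots)$ choice, saving one in $b_2$ (a rational blowdown of the $A_k$-type linear chain). I would first verify each family locally: a local zero subsequence inside the dual block yields $b_2$ smaller than the block's contribution. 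Then I would argue that embedding the block as an induced subgraph of a longer chain preserves the saving, since the neighbouring weights only increase the $b_i$ at the ends. The induced-subgraph condition matters because vertex weights may be larger in the ambient graph.

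\textbf{Main obstacle.} The hardest part is the sharp thresholds $k\ge s$ in $C'$ and $k+1\ge s$, $s\neq2$, $(k,s)\neq(0,0)$ in $D'$. The members of $C\setminus C'$ and $D\setminus D'$ give fillings with $b_2$ equal to, not less than, that of $X(p,q)$. This is exactly why \ref{it:blow_up} fails but \ref{it:min_filling} may still hold for them. So in the step \ref{it:min_filling}$\Rightarrow$\ref{it:combinatorial}, only $C'$ and $D'$ should appear. The cycle then needs rearranging: prove \ref{it:combinatorial}$\Rightarrow$\ref{it:blow_up} directly, and show that \ref{it:min_filling} failing is equivalent to containing a $C'$/$D'$/$A$/$B$ configuration. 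For the latter, I would compute the exact $b_2$ difference as an explicit function of $(k,s)$ via the point diagram. The residual cases $C\setminus C'$ and $D\setminus D'$ then need a separate argument showing that, in the presence of the other configurations, the extra fillings are in fact absent or non-minimal. I expect this case analysis to be the bulk of the work.
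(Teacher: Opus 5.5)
\textbf{The premise of your ``main obstacle'' is false, and it hides the easy equivalence.} You claim that members of $C\setminus C'$ and $D\setminus D'$ produce fillings with $b_2$ equal to $b_2(X(p,q))$ that are not blowups of $X(p,q)$. That would give lens spaces satisfying (i) but not (ii), which contradicts the statement you are proving.

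In fact (i)$\Leftrightarrow$(ii) follows at once from Lisca's description. The tuple $\mathbf n$ is obtained from $(0)$ by $l-1$ strict blowups, where $l$ is the length of the dual expansion. A middle blowup raises $\sum n_i$ by $3$ and an end blowup by $2$. The correct formula is $b_2(W_{p,q}(\mathbf n))=-1+\sum_i(b_i-n_i)$; yours is off by one, and $\sum_i(b_i-1)=b_2(X(p,q))+l-1$, not $b_2(X(p,q))$. With it one gets $b_2(W_{p,q}(\mathbf n))=b_2(X(p,q))-m$, where $m$ is the number of middle blowups. Moreover $m=0$ forces $\mathbf n=(1,2,\dots,2,1)$. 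So every minimal filling other than $X(p,q)$ has strictly smaller $b_2$. This is precisely the step you declined to prove directly.

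The restrictions defining $C'$ and $D'$ only make the list irredundant:
$C_{k,s}$ with $k<s$ contains $A_{k+1}$; $D_{k,s}$ with $k+1<s$ contains $C_{k,k}$; $D_{k,2}$ contains $A_0$; and $D_{0,0}$ contains $A_1$. The ``residual case'' analysis you anticipate therefore addresses something that does not exist.

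\textbf{The substantive direction has no mechanism.} For (iii)$\Rightarrow$(ii) you say only that you would ``analyze where the two differ''. Nothing explains how an arbitrary blowup history of $\mathbf n$, constrained by $n_i\le b_i$, localizes to a pattern of controlled shape in the dual string. The missing ideas are these.
First, a normal form for blowup sequences, proved by commutation lemmas: all end blowups come first, and each shallow blowup creating a trough (an interior entry $1$) is immediately followed by all deep blowups adjacent to that trough.
Second, restrict to the \emph{last} shallow blowup. Its subsequent deep blowups are recorded by a word $S$ in $L,R$ acting on the adjusted tuple $(1,-1,1)$.
Third, a case analysis on $S$ shows that the resulting adjusted tuple, with its $-1$ replaced by $0$, dominates on some consecutive block (possibly reversed) one of $(1,0^{[k]},k)$, $(1,k,0^{[k+1]},2)$, $(1,0^{[k_1]},1,0^{[k_2+1]},k_2,k_1+2)$.
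Finally, transport this through $n_i\le b_i$ and the adjusted-weight form of Riemenschneider duality to an induced $A$, $B$, $C$ or $D$ subgraph.

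The converse direction is also only gestured at. Your description of the dual pattern for $A_k$ is inaccurate: it is an adjusted block $(\ge1,0^{[k+1]},\ge k+1)$, i.e.\ $k+1$ interior $2$'s. Nothing is said for $B$, $C$, $D$. Each pattern needs either an explicit tuple or an explicit symplectic rational blowdown. For the tuples, one can pad with end blowups, perform one shallow blowup, then apply $R^{[k-1]}$, $RL^{[k]}$ or $R^{[k_1+1]}LR^{[k_2]}$.
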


\begin{rem}\label{rem:thm}
The theorem is still true if we replace $A\cup B\cup C'\cup D'$ by $A\cup B\cup C\cup D$. In $C$, when $k<s$, it contains $A_{k+1}$ as an induced subgraph. In $D$, when $k+1<s$, it contains $C_{k,k}$; when $s=2$, it contains $A_0$; when $(k,s)=(0,0)$, it contains $A_1$. We write $A\cup B\cup C'\cup D'$ instead of $A\cup B\cup C\cup D$ only to make the list of configurations minimal, in the sense that none of these configurations contain another as an induced subgraph.
\end{rem}

This shows that the nature of this problem for symplectic fillings is fundamentally different from the same problem for smooth negative-definite fillings, which was solved in \cite{definite}:

\begin{cor}\label{cor:infinite}
There does not exist a finite list of configurations such that not containing any configurations from the list as an induced subgraph is equivalent to the canonical negative-definite plumbing having minimum $b_2$ across all symplectic fillings.
\end{cor}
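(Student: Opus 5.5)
The plan is to argue by contradiction using Theorem~\ref{thm:main}, with the infinite family $B$ as the source of the contradiction. Suppose there were a finite list $\mathcal{F}$ of weighted graphs such that, for every lens space, the canonical plumbing has minimum $b_2$ among symplectic fillings if and only if its chain avoids every member of $\mathcal{F}$ as an induced subgraph. By the equivalence of \ref{it:min_filling} and \ref{it:combinatorial}, for every linear chain $\Gamma$ (with all weights $\le -2$) we would then have:
\[
\Gamma \text{ avoids } \mathcal{F} \iff \Gamma \text{ avoids } A\cup B\cup C'\cup D'.
\]
We may replace $C'$ and $D'$ by $C$ and $D$, by Remark~\ref{rem:thm}. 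Let $N$ be larger than the number of vertices of every graph in $\mathcal{F}$.

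First, the chain $B_N$ (which has $N+2$ vertices) contains $B_N$ itself, so it does not satisfy \ref{it:combinatorial}. Hence $B_N$ contains some $F\in\mathcal{F}$ as an induced subgraph, spanned by a vertex subset $S$. Since $|S|\le N<N+2$, there is a vertex $v$ of $B_N$ not in $S$. Now modify $B_N$ by changing only the weight of $v$ to $-M$, where $M>N+10$, and call the resulting chain $G$. The induced subgraph of $G$ on $S$ has the same vertices, edges and weights as before, so $G$ still contains $F$. Therefore $G$ violates the $\mathcal{F}$-condition.

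The remaining step, which is the only real check, is that $G$ contains no member of $A\cup B\cup C\cup D$. This gives the contradiction. Every member of these families is connected, so an induced copy in the chain $G$ is a run of consecutive vertices. The weights of $G$ are $-2$, $-3$ and a single $-M$, and there are at most $N$ consecutive $-2$'s anywhere in $G$. We check each family in turn.
\begin{itemize}
\item $A_k$ needs a vertex of weight $-(4+k)$, which forces $k=M-4$ or $k=-1$. The value $k=M-4$ then requires $M-4>N$ consecutive $-2$'s, and $k=-1$ is impossible.
\item $B_k$ needs two $-3$'s separated only by $-2$'s. In $G$ the vertex $v$ now has weight $-M$, so this fails: $v$ was either one of the two $-3$ ends or lies between them.
\item $C_{k,s}$ needs a vertex of weight $-(5+k)\le -5$, forcing $k=M-5$. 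It then also needs $k+2>N$ consecutive $-2$'s, which do not exist.
\item $D_{k,s}$ needs a vertex of weight $-(5+k)$, which must be $v$, so $k=M-5$ and again too many $-2$'s are required. (Taking instead $-(2+s)=-M$ would require a neighbour of weight $\le -5$, and there is none.)
\end{itemize}
Hence $G$ satisfies \ref{it:combinatorial} but not the $\mathcal{F}$-condition, contradicting the assumption. So no finite list exists.

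The main subtlety I expect is justifying that the hypothetical list $\mathcal{F}$ may be allowed to contain disconnected or arbitrary graphs. The argument above handles this uniformly, because $G$ retains the induced copy of $F$ no matter what shape $F$ has.
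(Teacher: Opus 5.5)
Your proof is correct, but it takes a different route from the paper's.

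\textbf{The paper's route.} The paper proves a stronger statement: any list $L$ with the stated property must contain every member of $A\cup B\cup C'\cup D'$. It starts from an arbitrary $G$ in the family and some $G_L\in L$ induced in $G$. It then joins the path components of $G_L$ through new vertices of a huge weight $-x$. The bound ``largest absolute weight $\le$ number of vertices $+3$'' forces the resulting configuration $G'\in A\cup B\cup C'\cup D'$ to lie inside $G_L\subseteq G$. The antichain property of the list (Remark~\ref{rem:thm}) then gives $G=G_L=G'$.

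\textbf{Your route.} You work entirely inside the single family $B$. A list member $F$ embedded in $B_N$ must miss some vertex $v$. Reweighting $v$ to $-M$ keeps the copy of $F$ intact while destroying every configuration in $A\cup B\cup C\cup D$. Your case checks are right: only $v$ has weight $\le -4$, the longest run of $-2$'s has length at most $N$, and the two $(-3)$-vertices are no longer joined by a run of $-2$'s. Because you check the larger family $A\cup B\cup C\cup D$, you do not even need Remark~\ref{rem:thm}.

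\textbf{What each approach buys.} Your argument is more economical. It does not use the minimality of the list, the decomposition of $G_L$ into paths, or the uniform weight bound, and it handles arbitrary (even disconnected or empty) $F$ uniformly. The paper's argument gives a sharper conclusion: $A\cup B\cup C'\cup D'$ is contained in every valid list, so it is the unique minimal forbidden list. Yours only shows that no valid list is finite. Both arguments tacitly use the standard fact that every chain with all weights $\le -2$ is the canonical plumbing graph of some lens space.
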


We prove Corollary \ref{cor:infinite} at the end of the paper.

\begin{rem}\label{rem:AMP}
All configurations in \cite{definite} except for
$$\begin{tikzpicture}[xscale=1.0,yscale=1,baseline={(0,0)}]
    \node at (0.9,0.4) {$-2$};
    \node at (1.9,0.4) {$-2$};
    \node (A1) at (1,0) {$\bullet$};
    \node (A2) at (2,0) {$\bullet$};
  \end{tikzpicture}\quad\text{ and }\quad\begin{tikzpicture}[xscale=1.0,yscale=1,baseline={(0,0)}]
    \node at (0.9,0.4) {$-3$};
    \node at (1.9,0.4) {$-2$};
		\node at (2.9,0.4) {$-2$};
    \node (A1) at (1,0) {$\bullet$};
    \node (A2) at (2,0) {$\bullet$};
		\node (A3) at (3,0) {$\bullet$};
		\path (A2) edge [-] node [auto] {$\scriptstyle{}$} (A3);
  \end{tikzpicture}$$
are contained in $A\cup B\cup C'\cup D'$. The 8 connected configurations in \cite{definite} are $A_0$, $A_1$, $A_2$, $B_0$, $B_1$, $B_2$, $D_{0,1}$, and $C_{0,0}$ respectively.
\end{rem}

We use the term $(-n)$-sphere to mean an embedded sphere with self-intersection $-n$.

\subsection*{Applications}

One application of Theorem~\ref{thm:main}, combined with Remark~\ref{rem:AMP}, is that it gives a method of easily generating examples of smooth negative-definite fillings that cannot be realized symplectically. This can be achieved by simply taking any linear plumbing that does not contain any configurations in $A\cup B\cup C'\cup D'$, but contains at least one of
$$\begin{tikzpicture}[xscale=1.0,yscale=1,baseline={(0,0)}]
    \node at (0.9,0.4) {$-2$};
    \node at (1.9,0.4) {$-2$};
    \node (A1) at (1,0) {$\bullet$};
    \node (A2) at (2,0) {$\bullet$};
  \end{tikzpicture}\quad\text{ or }\quad\begin{tikzpicture}[xscale=1.0,yscale=1,baseline={(0,0)}]
    \node at (0.9,0.4) {$-3$};
    \node at (1.9,0.4) {$-2$};
		\node at (2.9,0.4) {$-2$};
    \node (A1) at (1,0) {$\bullet$};
    \node (A2) at (2,0) {$\bullet$};
		\node (A3) at (3,0) {$\bullet$};
		\path (A2) edge [-] node [auto] {$\scriptstyle{}$} (A3);
  \end{tikzpicture},$$
	and then perform rational blow down according to \cite[\S 2]{definite}. A simple example would be:
	
\begin{cor}
For every lens space in the family $L=L(4k,2k+1)$ with $k\ge 5$, it bounds a smooth negative-definite manifold $X$ with $b_2(X)=2$ that is not diffeomorphic to any symplectic filling of $(L,\xi_{\mathrm{st}})$.
\end{cor}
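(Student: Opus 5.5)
The statement to prove is the corollary: for $L=L(4k,2k+1)$ with $k\ge 5$ there is a smooth negative-definite $X$ with $\partial X=L$ and $b_2(X)=2$ that is not diffeomorphic to any symplectic filling of $(L,\xi_{\mathrm{st}})$. The plan has three steps: identify the canonical plumbing, build $X$ by a rational blowdown, and then rule out every symplectic filling with $b_2=2$.

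First compute the Hirzebruch--Jung expansion of $4k/(2k+1)$. We have $4k/(2k+1)=2-\frac{2}{2k+1}$, and $\frac{2k+1}{2}=[k+1,2]$. So the expansion is
$$\frac{4k}{2k+1}=[2,k+1,2],$$
and $X(p,q)$ is the linear plumbing $-2,-(k+1),-2$, with $b_2=3$.

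Next check the forbidden configurations in Theorem \ref{thm:main}\ref{it:combinatorial}. The induced subgraphs of $-2,-(k+1),-2$ are:
\begin{itemize}
\item single vertices;
\item the disconnected pair $\{-2,-2\}$;
\item the chains $-2,-(k+1)$ and $-(k+1),-2$;
\item the full chain.
\end{itemize}
The family $A$ requires a vertex of weight $-(4+j)$ followed by $j$ vertices of weight $-2$. With $k+1\ge 6$, the only candidates are $A_{k-3}$ with $k-3\ge 2$ trailing $-2$'s, or the single vertex $A_0$, which is a $-4$ only when $k=3$. Neither occurs. The families $B$, $C'$ and $D'$ each need a $-3$ vertex, or at least two non-$-2$ vertices, so they do not occur either. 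Hence condition \ref{it:combinatorial} holds. By Theorem \ref{thm:main}, every symplectic filling is a blowup of $X(p,q)$, so every symplectic filling has $b_2\ge 3$. The restriction $k\ge 5$ is exactly what excludes the $A$-configurations; for example, $k=4$ would give $-5,-2$, which is $A_1$.

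It remains to produce $X$. The plumbing contains the disconnected induced subgraph $\{-2,-2\}$, one of the two configurations of Remark \ref{rem:AMP}. Following \cite[\S 2]{definite}, there is a smooth negative-definite filling with $b_2$ one less than that of $X(p,q)$. One concrete route is the following:
\begin{itemize}
\item The classes $e_1-e_2$ and $e_2-e_3$ generate a sublattice in which the two $-2$ spheres sit.
\item Alternatively, the union of the two $-2$-sphere neighbourhoods, tubed along the middle sphere, contains a configuration whose boundary is $L(4,1)$ or $L(4,3)$, and this configuration can be replaced by a rational ball.
\end{itemize}
I would follow the recipe in \cite{definite} directly: it replaces the pair of disjoint $-2$ vertices by a sphere of square $-4$ (the class of their sum) together with a rational ball, and this lowers $b_2$ by one.

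The outcome is $X$ with $b_2(X)=2<3$, still negative definite and with boundary $L$. Any symplectic filling diffeomorphic to $X$ would have $b_2=2$, contradicting the bound from Theorem \ref{thm:main}. The main obstacle is verifying the rational-blowdown step concretely for this family. I would rely on \cite[\S 2]{definite}, which shows that the presence of either configuration of Remark \ref{rem:AMP} yields a negative-definite filling with smaller $b_2$.
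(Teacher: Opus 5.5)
Your proposal is correct and follows the paper's proof. You compute the plumbing $-2,-(k+1),-2$, check that it contains no configuration from $A\cup B\cup C'\cup D'$ for $k\ge 5$ (so every symplectic filling has $b_2\ge 3$), and then tube the two disjoint $(-2)$-spheres into a $(-4)$-sphere and rationally blow it down to get a negative-definite $X$ with $b_2=2$. The aside about $e_1-e_2$ and $e_2-e_3$ is not needed and is also misleading, since those classes pair to $\pm1$ and so would model adjacent rather than disjoint $(-2)$-spheres; you can drop it, as your second bullet already contains the actual construction.
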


\begin{proof}
The canonical negative-definite plumbing graph of $L(4k,2k+1)$ is
$$\begin{tikzpicture}[xscale=1.0,yscale=1,baseline={(0,0)}]
    \node at (0.9,0.4) {$-2$};
    \node at (1.9,0.4) {$-(k+1)$};
		\node at (2.9,0.4) {$-2$};
    \node (A1) at (1,0) {$\bullet$};
    \node (A2) at (2,0) {$\bullet$};
		\node (A3) at (3,0) {$\bullet$};
		\path (A1) edge [-] node [auto] {$\scriptstyle{}$} (A2);
		\path (A2) edge [-] node [auto] {$\scriptstyle{}$} (A3);
  \end{tikzpicture}$$
By tubing the two $(-2)$-spheres together, we obtain a smoothly embedded $(-4)$-sphere. Performing a smooth rational blowdown along this sphere produces a smooth negative-definite $X$ with $b_2(X)=2$ (see section 2 of \cite{definite}). However, the plumbing graph does not contain any configurations in $A\cup B\cup C'\cup D'$ as an induced subgraph, and therefore by Theorem~\ref{thm:main}, the minimum $b_2$ among all symplectic fillings of $(L,\xi_{\mathrm{st}})$ is 3.
\end{proof}

Since Bhupal and Ozbagci \cite{bhupal2016symplectic} proved that every minimal\footnote{Here, ``minimal'' means that the filling contains no symplectically embedded $(-1)$-sphere. This differs from minimality of $b_2$ among all fillings, which is the main topic of this paper. However, a filling with minimal $b_2$ is minimal in this sense, since otherwise symplectically blowing down a $(-1)$-sphere would produce a filling with smaller $b_2$.} symplectic filling of the standard contact structure on lens spaces can be obtained from the canonical plumbing by a sequence of symplectic rational blowdowns, Theorem~\ref{thm:main} also yields the following criterion for when the canonical plumbing admits a symplectic rational blowdown:

\begin{cor}\label{cor:blowdown}
Let $X(p,q)$ be the canonical negative-definite plumbing of $L(p,q)$. Then $X(p,q)$ admits a symplectic rational blowdown if and only if its plumbing graph contains a configuration in $A\cup B\cup C'\cup D'$ as an induced subgraph.
\end{cor}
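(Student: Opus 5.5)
Corollary~\ref{cor:blowdown} follows from Theorem~\ref{thm:main} together with the Bhupal--Ozbagci result quoted just above. I will show that \emph{$X(p,q)$ admits a symplectic rational blowdown} is equivalent to the negation of condition~\ref{it:blow_up}. The corollary then follows from the equivalence \ref{it:blow_up}$\Leftrightarrow$\ref{it:combinatorial}. Throughout, "symplectic rational blowdown of $X(p,q)$" means replacing a symplectically embedded plumbing (a neighbourhood of a configuration of symplectic spheres in $X(p,q)$) with its rational homology ball filling. This produces a symplectic filling of $(L(p,q),\xi_{\mathrm{st}})$ with strictly smaller $b_2$, since the removed plumbing has $b_2\ge 1$ and the ball has $b_2=0$.

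\emph{($\Rightarrow$)} Suppose $X(p,q)$ admits a symplectic rational blowdown. The result $X'$ is a symplectic filling of $(L,\xi_{\mathrm{st}})$ with $b_2(X')<b_2(X(p,q))$. So condition~\ref{it:min_filling} fails. By Theorem~\ref{thm:main}, condition~\ref{it:combinatorial} also fails, which means the graph contains a configuration in $A\cup B\cup C'\cup D'$ as an induced subgraph.

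\emph{($\Leftarrow$)} Suppose the graph contains such a configuration. By Theorem~\ref{thm:main}, condition~\ref{it:min_filling} fails, so some symplectic filling $X$ has $b_2(X)<b_2(X(p,q))$. Choose such an $X$ with $b_2$ minimal among all fillings. By the footnote above, $X$ is then minimal in the sense of containing no symplectic $(-1)$-sphere. By Bhupal--Ozbagci~\cite{bhupal2016symplectic}, $X$ is obtained from $X(p,q)$ by a sequence of symplectic rational blowdowns. This sequence cannot be empty, since $b_2(X)\neq b_2(X(p,q))$. Its first step is therefore a symplectic rational blowdown performed on $X(p,q)$ itself.

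The only point needing care is the definitional one: the Bhupal--Ozbagci sequence must have its first step applied directly to $X(p,q)$, in the same sense of "rational blowdown" used in the corollary. I expect this to be the main (mild) obstacle. I would address it by recording the precise form of the Bhupal--Ozbagci statement, namely that the first blowdown is along a configuration of symplectic spheres embedded in $X(p,q)$. The rest is a formal consequence of Theorem~\ref{thm:main}.
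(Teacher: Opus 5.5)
Your proposal is correct and matches the paper's proof essentially step for step. For ($\Rightarrow$) you pass to a filling with smaller $b_2$ and apply Theorem~\ref{thm:main}; for ($\Leftarrow$) you choose a $b_2$-minimal filling, note that it is symplectically minimal, and apply Bhupal--Ozbagci to get a nonempty sequence of rational blowdowns starting at $X(p,q)$. The only cosmetic slip is that you announce an equivalence with the negation of \ref{it:blow_up} but actually argue through \ref{it:min_filling}. This is harmless, since \ref{it:min_filling}$\Leftrightarrow$\ref{it:blow_up} is part of the theorem.
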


\begin{proof}
If $X(p,q)$ admits a symplectic rational blowdown, then the result is a symplectic filling of $(L(p,q),\xi_{\mathrm{st}})$ with a smaller $b_2$. Hence, by Theorem~\ref{thm:main}, the plumbing graph contains a configuration in $A\cup B\cup C'\cup D'$ as an induced subgraph.

Conversely, if the plumbing graph contains a configuration in $A\cup B\cup C'\cup D'$ as an induced subgraph, then by Theorem~\ref{thm:main} there is a symplectic filling of $(L(p,q),\xi_{\mathrm{st}})$ with a smaller $b_2$. Choose such a filling with minimal $b_2$. It is symplectically minimal, since otherwise symplectically blowing down a $(-1)$-sphere would yield a filling with smaller $b_2$. By \cite{bhupal2016symplectic}, it is obtained from the canonical plumbing by a sequence of symplectic rational blowdowns. Since its $b_2$ is smaller, this sequence is nonempty. Hence, the canonical plumbing admits a symplectic rational blowdown.
\end{proof}

\begin{rem}
Corollary~\ref{cor:blowdown} is an existence statement. The rational blowdown configuration need not be apparent from the canonical plumbing graph itself. See Remark~\ref{rem:final} for an explicit construction.
\end{rem}
	
While this paper concerns lens spaces for which the canonical negative-definite plumbing $X(p,q)$ has minimal $b_2$ among symplectic fillings of $(L(p,q),\xi_{\mathrm{st}})$, one may also ask about lens spaces where the canonical negative-definite plumbing is \textit{nearly} $b_2$-minimal. Precisely, one may ask:

\begin{ques}
Fix some small $k\geq 1$. Which lens spaces satisfy the property that for every symplectic filling $X$ of $(L(p,q),\xi_{\mathrm{st}})$, we have $b_2(X) \ge b_2(X(p,q))-k$? Can we perform a similar classification based on forbidden induced subgraphs? 
\end{ques}

The author believes that such a generalization can be done with techniques demonstrated in this paper, with more cumbersome case splitting needed in the combinatorial analysis steps, classifying tuples with at least $k+1$ middle blowups instead of at least one middle blowup (see the proof outline subsection below).

Another potential application is that Section \ref{sec:setup} provides a systematic way to organize sequences of blowups. By showing that any strict blowup sequence can be reordered in a specific way, we obtain a normal form for blowup processes. This normal form can be used more broadly to analyze and obstruct constructions that rely on iterated blowups, for example the tuples corresponding to symplectic fillings studied in this paper, and potentially more generally in settings where iterated blowups play a role, such as in low-dimensional topology and algebraic geometry.

\subsection*{Proof outline for Theorem~\ref{thm:main}}

Lisca \cite{Lisca:2008-1} classified symplectic fillings of the standard contact structures of lens spaces according to a combinatorial condition called \textit{admissible $k$-tuples}. The equivalence \ref{it:min_filling}$\Leftrightarrow$\ref{it:blow_up} follows directly from Lisca's work. The majority of the work is devoted to proving \ref{it:combinatorial}$\Rightarrow$\ref{it:min_filling}. To do so, we show that the negation of \ref{it:min_filling} implies the negation of \ref{it:combinatorial}. First, we show that a symplectic filling $X$ with $b_2(X)<b_2(X(p,q))$ corresponds to a tuple with at least one \textit{middle blowup} (see Definition~\ref{def:middle blowup}). Then, we analyze the subtuples of such a tuple (see Lemma~\ref{lem:n covering}). Then, we convert it into a statement about the plumbing graph. We provide two equally quick ways to prove \ref{it:min_filling}$\Rightarrow$\ref{it:combinatorial}. One way reverses the argument above: given an induced subgraph of the plumbing graph that is in $A\cup B\cup C'\cup D'$, we construct a $k$-tuple that corresponds to a symplectic filling $X$ with $b_2(X)<b_2(X(p,q))$. The other approach is to explicitly exhibit a symplectic rational blowdown associated to each configuration in $A\cup B\cup C'\cup D'$.

\subsection*{Structure of the article}

In Section~\ref{sec:lisca}, we recall some details from Lisca's work on symplectic fillings of lens spaces \cite{Lisca:2008-1}. In Section~\ref{sec:setup}, we investigate ways to rearrange the order of doing blowups without changing the result. In Section~\ref{sec:dual}, we recall the relationship between the canonical negative-definite plumbing of $L(p,q)$ and the canonical negative-definite plumbing of $L(p,p-q)$. In Section~\ref{sec:main}, we combine the above ideas and perform the technical combinatorial analysis needed to prove Theorem~\ref{thm:main}.

\subsection*{Notations}

Throughout this paper, $a^{[b]}$ denotes $b$ consecutive copies of $a$. For example, $(1,2^{[3]},4)$ denotes the tuple $(1,2,2,2,4)$, and $(1,2^{[0]},4)$ denotes the tuple $(1,4)$.

\subsection*{Acknowledgements} 

The author would like to thank JungHwan Park, Marco Golla, and Brendan Owens for helpful comments on an earlier draft of this paper. The majority of this work was carried out while the author was a postdoctoral researcher at KAIST. The author is partially supported by the Samsung Science and Technology Foundation (SSTF-BA2102-02) and by the NRF grant RS-2025-00542968.

\section{Lisca's construction}\label{sec:lisca}

We recall some key theorems and lemmas from \cite{Lisca:2008-1} that this paper will be built on.

We first define the concept of performing a \textit{strict blowup} on a tuple, which means performing a blowup not at the left end of the tuple. Formally:

\begin{defn}[{\cite[Def.~2.1]{Lisca:2008-1}}]
Let $\textbf{n}=(n_1,\dots,n_k)$ be a tuple of non-negative integers for some $k\ge 1$. We say that $\textbf{n}'$ is obtained from $\textbf{n}$ by a \textit{strict blowup} if $\textbf{n}'$ is in the form of either
$$(n_1,\dots,n_{s-1},n_s+1,1,n_{s+1}+1,n_{s+2},\dots,n_k)$$
for some $1\le s\le k-1$, or
$$(n_1,\dots,n_{k-1},n_k+1,1).$$
\end{defn}

Let $p>q\ge 1$ be coprime integers. Consider the Hirzebruch-Jung expansion of $p\slash(p-q)$:
$$\dfrac{p}{p-q}= b_1 - \cfrac{1}{b_2 - \cfrac{1}{\ddots - \cfrac{1}{b_k}}}$$
where $b_1,\dots,b_k\ge 2$. Throughout this paper, we denote this expansion as $[b_1,\dots,b_k]^-$. Note that the fraction on the left hand side is $p\slash(p-q)$, not $p\slash q$. Whenever the notation $b_2$ arises, it should be clear from the context whether it refers to the the entry in the Hirzebruch-Jung expansion or refers to the second Betti number.

Let $Z_{p,q}\subset\Z^k$ be the set of $k$-tuples $(n_1,\dots,n_k)$ satisfying both of the following:
\begin{enumerate}
\item $0\le n_i\le b_i$ for all $i$; and
\item $(n_1,\dots,n_k)$ is obtained from $(0)$ by a sequence of strict blowups.
\end{enumerate}

At first glance, the definition of $Z_{p,q}$ here seems to be different from \cite{Lisca:2008-1}. However, they are equivalent due to \cite[Lem. 2.2]{Lisca:2008-1}.

In \cite{Lisca:2008-1}, for each $\textbf{n}\in Z_{p,q}$, Lisca constructed a 4-manifold $W_{p,q}(\textbf{n})$ with $L(p,q)$ as boundary. In this paper, whenever we say that a manifold with boundary $L(p,q)$ ``corresponds to'' a tuple $\textbf{n}$, or a tuple $\textbf{n}$ ``corresponds to'' a manifold, we mean that the manifold is orientation-preserving diffeomorphic to $W_{p,q}(\textbf{n})$.

The manifold $W_{p,q}(\textbf{n})$ carries a symplectic structure that fills $(L(p,q),\xi_{\mathrm{st}})$ \cite[Thm. 1.1(b)]{Lisca:2008-1}, and satisfies
$$b_2(W_{p,q}(\textbf{n}))=-1+\Sigma_i(b_i-n_i).$$
When $k\ge 2$, the canonical negative-definite plumbing $X(p,q)$ corresponds to the $k$-tuple $(1,2,2,\dots,2,1)$, where the number of $2$'s is $k-2$. When $k=1$, the corresponding $k$-tuple is $(0)$. In both cases, the sum of the entries is $2k-2$.

Hence, we have
$$b_2(X(p,q))=-1+\Sigma_i b_i-(2k-2).$$
Therefore,
$$b_2(W_{p,q}(\textbf{n}))=b_2(X(p,q))+(2k-2)-\Sigma_i n_i.$$
The following result by Lisca will serve as a foundation for this paper.

\begin{lem}\cite[Thm. 1.1 (a)]{Lisca:2008-1}\label{lem:lisca}
Let $(W,\omega)$ be a symplectic filling of $(L(p,q),\xi_{\mathrm{st}})$. Then, for some $\textbf{n}\in Z_{p,q}$, $W$ is orientation-preserving diffeomorphic to a manifold obtained from $W_{p,q}(\textbf{n})$ by a (possibly empty) sequence of smooth blowups.
\end{lem}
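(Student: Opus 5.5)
This lemma is quoted directly from \cite[Thm.~1.1(a)]{Lisca:2008-1}, so here we only outline how Lisca's argument goes rather than reprove it. The argument has three steps.

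\emph{Step 1: compactify the filling.} Start from the canonical plumbing of $L(p,p-q)$, the chain of spheres with self-intersections $-b_1,\dots,-b_k$. Concave-cap $(L(p,q),\xi_{\mathrm{st}})$ by a neighbourhood of a configuration of symplectic spheres built from this chain together with one additional $(+1)$-sphere. Gluing this cap to any filling $(W,\omega)$ gives a closed symplectic $4$-manifold $M$ containing a symplectic $(+1)$-sphere. By McDuff's theorem, $M$ is a blowup of $\mathbb{CP}^2$, and the $(+1)$-sphere is a line.

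\emph{Step 2: analyse the cap inside $M$.} The chain spheres then lie in classes of the form $h - \sum e_j$ or $\sum \pm e_j$. Positivity of intersections with the $J$-holomorphic lines through the relevant points is what pins down these classes and constrains the exceptional curves. Blowing down exceptional spheres disjoint from the cap shows that the configuration arises from a pencil-type arrangement of lines by iterated blowups at intersection points. Recording how many exceptional curves meet the $i$-th chain sphere gives integers $n_i$, and this bookkeeping produces:
\begin{itemize}
\item the bound $0 \le n_i \le b_i$;
\item a $k$-tuple $\mathbf{n}=(n_1,\dots,n_k)$ obtained from $(0)$ by strict blowups, since each blowup at an intersection point of adjacent spheres acts on the tuple exactly as in the definition;
\end{itemize}
so $\mathbf{n}\in Z_{p,q}$.

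\emph{Step 3: recover $W$.} The complement of the cap is $W$. Comparing with Lisca's explicit construction of $W_{p,q}(\mathbf{n})$, we see that $W$ is $W_{p,q}(\mathbf{n})$ up to the extra exceptional spheres that are disjoint from the cap. These account for smooth blowups. Hence $W \cong W_{p,q}(\mathbf{n}) \# r\overline{\mathbb{CP}}^2$, orientation-preservingly, for some $r\ge 0$.

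The main obstacle is Step 2, the homological classification of the embedded configuration in the blowup of $\mathbb{CP}^2$. Since the result is already established in Lisca's paper, we simply invoke it as stated.
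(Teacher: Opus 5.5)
Your proposal matches the paper: the paper gives no independent proof of this lemma and simply quotes Lisca's Theorem~1.1(a), which is all that is needed here. Your sketch of Lisca's argument is only heuristic and loose in places. For instance, in $W_{p,q}(\mathbf n)$ it is $b_i-n_i$, not $n_i$, that counts the $(-1)$-framed handles, i.e.\ the exceptional spheres meeting only the $i$-th chain sphere. Blowing down spheres disjoint from the cap also does not by itself simplify the cap configuration. Since you explicitly rely on the citation \cite{Lisca:2008-1} rather than on the sketch, this does not affect correctness.
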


A corollary of Lemma~\ref{lem:lisca} is that the statement Theorem~\ref{thm:main}\ref{it:min_filling} is equivalent to saying that there is no $\textbf{n}\in\Z_{p,q}$ satisfying $b_2(W_{p,q}(\textbf{n}))<b_2(X(p,q))$.

\section{Set up}\label{sec:setup}

Starting from here, until the end of this paper, $\textbf{n}$ always represent a tuple obtained from $(0)$ through a sequence of strict blowups.

The goal of this section is to break down the blowup process into components that allow us to write down the blowup process in a nice way. Consider the sequence of strict blowups:

$$(0)\rightarrow(1,1)\rightarrow(2,1,2)\rightarrow(2,1,3,1)\rightarrow(2,1,4,1,2)\rightarrow(2,1,5,1,2,2)$$
$$\rightarrow(3,1,2,5,1,2,2)\rightarrow(3,1,2,5,2,1,3,2)\rightarrow(3,2,1,3,5,2,1,3,2).$$

One obvious way to describe the blowup process would be:
\begin{enumerate}
\item blowup at position 1;
\item blowup at position 1;
\item blowup at position 3;
\item blowup at position 3;
\item blowup at position 3;
\item blowup at position 1;
\item blowup at position 5;
\item blowup at position 2.
\end{enumerate}

However, this description does not provide much insight into the blowup process. After making various definitions in this chapter, we will be able to rewrite this as:
\begin{enumerate}
\item end blowup;
\item shallow blowup at position 1, creating trough $T_1$;
\item end blowup;
\item shallow blowup at position 3, creating trough $T_2$;
\item deep blowup at left of trough $T_2$;
\item deep blowup at left of trough $T_1$;
\item deep blowup at right of trough $T_2$;
\item deep blowup at right of trough $T_1$,
\end{enumerate}

Through lemmas in this section that describe how actions can be swapped, we will then be able to arrange this into:
\begin{enumerate}
\item end blowup 2 times;
\item shallow blowup at position 1, creating trough $T_1$;
\item deep blowup at left of trough $T_1$;
\item deep blowup at right of trough $T_1$,
\item shallow blowup at position 5, creating trough $T_2$;
\item deep blowup at left of trough $T_2$;
\item deep blowup at right of trough $T_2$;
\end{enumerate}

Explicitly, the reordered sequence is:

$$(0)\rightarrow(1,1)\rightarrow(1,2,1)\rightarrow(2,1,3,1)\rightarrow(3,1,2,3,1)\rightarrow(3,2,1,3,3,1)$$
$$\rightarrow(3,2,1,3,4,1,2)\rightarrow(3,2,1,3,5,1,2,2)\rightarrow(3,2,1,3,5,2,1,3,2).$$

While there may be ways to describe shallow blowups (Definition~\ref{def:trough}) not based on positions so that the description does not change under swapping, developing such mechanism is not needed for the proof of Theorem~\ref{thm:main} and therefore we will not do that.

\subsection{Different types of blowups}

We start with the following definition.

\begin{defn}\label{def:middle blowup}
If a strict blowup occurs at the right end, i.e. the $(n_1,\dots,n_{k-1},n_k+1,1)$ case in the definition of strict blowup in the previous section, we say that the strict blowup is an \textit{end blowup}. Otherwise, it is a \textit{middle blowup}.
\end{defn}

Before we go further, we first note the significance of dividing strict blowups into end blowups and middle blowups.

\begin{lem}\label{lem:thm equiv 1}
In Theorem~\ref{thm:main}, statement~\ref{it:min_filling} is equivalent to no element in $\textbf{n}\in Z_{p,q}$ is obtained from a strict blowup process from $(0)$ that involves at least one middle blowup.
\end{lem}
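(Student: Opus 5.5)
The plan is to reduce statement~\ref{it:min_filling} to a counting statement about the entries of tuples, using the identity
$$b_2(W_{p,q}(\textbf{n}))=b_2(X(p,q))+(2k-2)-\Sigma_i n_i$$
established above, together with Lemma~\ref{lem:lisca}. Smooth blowups only increase $b_2$. So by Lemma~\ref{lem:lisca}, statement~\ref{it:min_filling} holds if and only if every $\textbf{n}\in Z_{p,q}$ satisfies $\Sigma_i n_i\le 2k-2$, where $k$ is the length of $\textbf{n}$ (equal to the length of the expansion of $p/(p-q)$). It therefore suffices to show that, for a tuple of length $k$ obtained from $(0)$ by strict blowups, $\Sigma_i n_i=2k-2$ exactly when only end blowups are used, and $\Sigma_i n_i>2k-2$ when at least one middle blowup occurs.

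I would prove this by tracking how each type of blowup changes the length and the entry sum. Every strict blowup increases the length by exactly one.
\begin{itemize}
\item An end blowup $(\dots,n_k)\mapsto(\dots,n_k+1,1)$ increases the sum by $2$.
\item A middle blowup $(\dots,n_s,n_{s+1},\dots)\mapsto(\dots,n_s+1,1,n_{s+1}+1,\dots)$ increases the sum by $3$.
\end{itemize}
Starting from $(0)$, which has length $1$ and sum $0=2\cdot1-2$, the quantity $\Sigma_i n_i-(2\,\mathrm{length}-2)$ is unchanged by end blowups and increases by $1$ with each middle blowup. By induction along the blowup sequence, this quantity equals the number of middle blowups used. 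In particular, it is independent of which sequence produced $\textbf{n}$, since it is determined by $\textbf{n}$ itself.

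Combining the two steps, $b_2(W_{p,q}(\textbf{n}))=b_2(X(p,q))-m$, where $m$ is the number of middle blowups in any strict blowup sequence producing $\textbf{n}$. For the direction where~\ref{it:min_filling} fails, suppose some filling $W$ has $b_2(W)<b_2(X(p,q))$. Lemma~\ref{lem:lisca} writes $W$ as a blowup of some $W_{p,q}(\textbf{n})$, so $b_2(W_{p,q}(\textbf{n}))\le b_2(W)<b_2(X(p,q))$, which forces $m\ge1$. Conversely, if some $\textbf{n}\in Z_{p,q}$ arises from a sequence with at least one middle blowup, then $W_{p,q}(\textbf{n})$ is a symplectic filling with strictly smaller $b_2$, so~\ref{it:min_filling} fails.

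The only delicate point is well-definedness. The phrase ``obtained from a strict blowup process that involves at least one middle blowup'' could in principle depend on the chosen sequence. The invariant computed in the second step shows it depends only on $\textbf{n}$, so I expect no real obstacle here beyond stating this explicitly.
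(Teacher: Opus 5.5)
Your proposal is correct and follows essentially the same route as the paper. Both arguments count entry sums (an end blowup adds $2$, a middle blowup adds $3$) to show that the number of middle blowups equals $b_2(X(p,q))-b_2(W_{p,q}(\textbf{n}))$, then conclude via Lemma~\ref{lem:lisca}; your explicit remark that this count depends only on $\textbf{n}$ spells out a point the paper leaves implicit.
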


\begin{proof}
This proof is similar to the argument near the end of the proof of \cite[Cor. 1.5]{forbidden} (the proof is in Section 7).

Recall from the previous section that
$$b_2(W_{p,q}(\textbf{n}))=b_2(X(p,q))+(2k-2)-\Sigma_i n_i.$$

Observe that a $k$-tuple in $Z_{p,q}$ is obtained by doing strict blowup from $(0)$ for $k-1$ times. Every middle blowup increases $\Sigma_i n_i$ by 3, while every end blowup increases $\Sigma_i n_i$ by 2. Hence, we have
$$\Sigma_i n_i=(2k-2)+(\text{number of middle blowups performed}).$$

Therefore, the number of middle blowups performed equals to $b_2(X(p,q))-b_2(W_{p,q}(\textbf{n}))$. The result follows from Lemma~\ref{lem:lisca}.
\end{proof}

\begin{cor}\label{cor:1 2 equiv}
Let $L(p,q)$ be a lens space. Then, in Theorem~\ref{thm:main}, \ref{it:min_filling} is equivalent to \ref{it:blow_up}.
\end{cor}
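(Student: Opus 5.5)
We prove the two implications separately, using Lemma~\ref{lem:lisca} and the formula from the proof of Lemma~\ref{lem:thm equiv 1}, namely that $b_2(X(p,q))-b_2(W_{p,q}(\textbf{n}))$ equals the number of middle blowups used to produce $\textbf{n}$ from $(0)$.

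\ref{it:blow_up}$\Rightarrow$\ref{it:min_filling}: each smooth blowup raises $b_2$ by one. So if every symplectic filling $X$ is a blowup of $X(p,q)$, then $b_2(X)\ge b_2(X(p,q))$.

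\ref{it:min_filling}$\Rightarrow$\ref{it:blow_up}: let $X$ be a symplectic filling of $(L,\xi_{\mathrm{st}})$. By Lemma~\ref{lem:lisca}, $X$ is diffeomorphic to a blowup of $W_{p,q}(\textbf{n})$ for some $\textbf{n}\in Z_{p,q}$. Since $W_{p,q}(\textbf{n})$ is itself a symplectic filling, Lemma~\ref{lem:thm equiv 1} and \ref{it:min_filling} show that $\textbf{n}$ is reached from $(0)$ using only end blowups. Starting from $(0)$, a sequence of $k-1$ end blowups is forced: $(0)\to(1,1)\to(1,2,1)\to\cdots\to(1,2,\dots,2,1)$. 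Each end blowup adds $1$ to the last entry and appends a $1$, so by induction the result is $(1,2^{[k-2]},1)$ for $k\ge2$, and $(0)$ for $k=1$. These are exactly the tuples corresponding to $X(p,q)$, as recalled in Section~\ref{sec:lisca}. Hence $W_{p,q}(\textbf{n})\cong X(p,q)$, and $X$ is a blowup of $X(p,q)$.

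The only step needing care is this identification of the all-end-blowup tuple with the tuple of the canonical plumbing. It is a short induction, so I do not expect a real obstacle.
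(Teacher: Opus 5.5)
Your proposal is correct and follows the paper's proof. The direction \ref{it:blow_up}$\Rightarrow$\ref{it:min_filling} holds because blowups do not decrease $b_2$. For \ref{it:min_filling}$\Rightarrow$\ref{it:blow_up}, Lemma~\ref{lem:thm equiv 1} allows only end blowups, so $\textbf{n}=(1,2,\dots,2,1)$, which is the tuple of $X(p,q)$, and Lemma~\ref{lem:lisca} finishes the argument; your short induction only spells out a step the paper states without proof.
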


\begin{proof}
In Theorem~\ref{thm:main}, condition \ref{it:blow_up} implies \ref{it:min_filling}, since blowing up a manifold does not reduce $b_2$. If \ref{it:min_filling} holds, then Lemma~\ref{lem:thm equiv 1} implies that any $\textbf{n}\in Z_{p,q}$ must come from $(0)$ with only end blowups. This gives $\textbf{n}=(1,2,2,\dots,2,1)$, which corresponds to the canonical negative-definite plumbing. Statement \ref{it:blow_up} follows from Lemma~\ref{lem:lisca}.
\end{proof}

Now, we prove the following lemma which allows us to isolate the effect of middle blowups and end blowups.

\begin{lem}\label{lem:end blowups first}
If $\textbf{n}$ is obtained from $(0)$ via a sequence of strict blowups consisting of $x$ end blowups and $y$ middle blowups, then $\textbf{n}$ can be obtained from $(0)$ by first performing $x$ end blowups, followed by $y$ middle blowups.
\end{lem}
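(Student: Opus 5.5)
The plan is to prove the statement by a bubble-sort argument. The key local fact is that an adjacent pair of operations ``middle blowup, then end blowup'' can always be replaced by ``end blowup, then middle blowup'' with the same final tuple. Repeatedly applying this swap moves every end blowup in front of every middle blowup. This gives the required sequence of $x$ end blowups followed by $y$ middle blowups, since each swap preserves the total number of end and middle blowups.

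For the local swap, let $\textbf{m}=(n_1,\dots,n_k)$ be any intermediate tuple. Suppose the sequence performs a middle blowup at position $s$ with $1\le s\le k-1$, and then an end blowup. I will show that performing the end blowup first and then the middle blowup at the \emph{same} position $s$ gives the same tuple. This is legal, because after the end blowup the tuple has length $k+1$, and $s\le k-1\le k$, so the blowup at $s$ is still a middle blowup. The check splits into two cases.
\begin{itemize}
\item If $s\le k-2$, the middle blowup only alters entries with index at most $s+1\le k-1$, and it shifts $n_k$ to the last position unchanged. The end blowup only alters the last entry and appends a $1$. The two operations act on disjoint parts of the tuple, so they commute. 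Both orders give $(n_1,\dots,n_s+1,1,n_{s+1}+1,\dots,n_k+1,1)$.
\item If $s=k-1$, the middle-then-end order gives $(\dots,n_{k-1}+1,1,n_k+1)$ and then $(\dots,n_{k-1}+1,1,n_k+2,1)$. The end-then-middle order gives $(\dots,n_{k-1},n_k+1,1)$ and then $(\dots,n_{k-1}+1,1,n_k+2,1)$. These agree.
\end{itemize}
The sequence before the swapped pair is untouched. The tuple immediately after the pair is unchanged, so the rest of the sequence still applies verbatim: a blowup at a given position of the same tuple is the same operation.

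Finally, I will induct on the number of inversions, namely pairs (middle blowup, later end blowup) in the sequence. If this number is positive, some middle blowup is immediately followed by an end blowup. Swapping that pair as above reduces the number of inversions by exactly one and does not change the resulting tuple $\textbf{n}$. When no inversions remain, all $x$ end blowups precede the $y$ middle blowups, which is the claim.

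I expect the only real care to be needed in the boundary case $s=k-1$. There the middle blowup increments the current last entry, so the two operations genuinely interact, and one must verify that the increments still add up as computed above. The rest is bookkeeping.
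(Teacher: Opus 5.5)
Your proof is correct and is essentially the paper's argument. The paper also replaces a middle blowup followed by an end blowup with an end blowup followed by a middle blowup at the same position, checks the same two cases ($s\le k-2$ and $s=k-1$), and then keeps swapping. Your legality check and inversion-count induction just spell out the details that the paper's ``keep swapping'' step leaves implicit.
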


In other words, we can do all the end blowups first.

\begin{proof}
We show that if $\textbf{n}'$ is obtained from $\textbf{n}$ via a middle blowup followed by an end blowup, then it can be obtained from $\textbf{n}$ via an end blowup followed by a middle blowup. This proves the lemma because then we can keep swapping the order of end and middle blowups until all the end blowups are performed before the middle blowups.

Let $\textbf{n}=(n_1,\dots,n_k)$. Performing a middle blowup followed by an end blowup, we get
$$(n_1,\dots,n_{s-1},n_s+1,1,n_{s+1}+1,n_{s+2},\dots,n_{k-1},n_k+1,1)$$
or
$$(n_1,\dots,n_{k-2},n_{k-1}+1,1,n_k+2,1).$$
In both cases, the same can be obtained by first performing an end blowup followed by a middle blowup.
\end{proof}

We further subdivide middle blowups in the following definition.

\begin{defn}\label{def:trough}
A \textit{trough} in $\textbf{n}$ is an entry with value $1$ that is not located at the left or right end. A \textit{deep} blowup is a middle blowup performed next to a trough. A \textit{shallow} blowup is a middle blowup performed not next to a trough.
\end{defn}

Now we start investigating properties of $\textbf{n}$.

\begin{lem}
If $\textbf{n}$ is not $(0)$, then all entries of $\textbf{n}$ are positive integers. 
\end{lem}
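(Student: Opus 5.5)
We argue by induction on the number of strict blowups used to produce $\textbf{n}$ from $(0)$. By the standing convention at the start of this section, $\textbf{n}$ is obtained from $(0)$ by some finite sequence of strict blowups, and $\textbf{n}\neq(0)$ means this sequence is nonempty. We therefore prove the slightly stronger statement: every tuple obtained from $(0)$ by at least one strict blowup has all entries at least $1$.

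For the base case, a single strict blowup of $(0)$ has $k=1$. The only possibility is then the end blowup, which gives $(0+1,1)=(1,1)$, and all its entries are positive.

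For the inductive step, suppose $\textbf{n}=(n_1,\dots,n_k)$ is obtained from $(0)$ by at least one strict blowup and has all $n_i\ge 1$. Let $\textbf{n}'$ be obtained from $\textbf{n}$ by one further strict blowup. We check the two forms in the definition of strict blowup.
\begin{itemize}
\item In the middle case, the new tuple is $(n_1,\dots,n_{s-1},n_s+1,1,n_{s+1}+1,n_{s+2},\dots,n_k)$. The unchanged entries are at least $1$ by hypothesis, the modified entries $n_s+1$ and $n_{s+1}+1$ are at least $2$, and the inserted entry equals $1$.
\item In the end case, the new tuple is $(n_1,\dots,n_{k-1},n_k+1,1)$, and the same reasoning applies.
\end{itemize}
In fact only nonnegativity of the old entries is needed, since each entry of $\textbf{n}'$ is either an old entry, an old entry plus one, or a new $1$. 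So even the base case is covered by noting that every entry of $\textbf{n}'$ either is a new $1$ or has been increased by one.

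There is no real obstacle. The only point requiring care is that the lemma concerns tuples reachable from $(0)$ by strict blowups, and not arbitrary tuples; this is exactly the convention fixed at the beginning of this section. It is also exactly what excludes $(0)$ itself, since $(0)$ arises from the empty sequence and is the one case where an entry equals zero.
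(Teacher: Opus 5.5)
Your proof is correct and follows the paper's argument. The paper also starts from $(0)\rightarrow(1,1)$ and then observes that every strict blowup inserts a new entry equal to $1$ and only increases the other entries, so no zero or negative entry can appear; your induction spells out exactly this reasoning.
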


\begin{proof}
From $(0)$, after one blowup, it becomes $(1,1)$. Every blowup introduces an entry with value $1$ and increases some other values, so it can never produce a $0$ or a negative entry.
\end{proof}

\begin{lem}\label{lem:no adjacent troughs}
Two troughs cannot be adjacent to each other.
\end{lem}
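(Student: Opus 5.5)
Since every tuple $\textbf{n}$ under consideration arises from $(0)$ by strict blowups, I will prove the lemma by induction on the number of strict blowups. The inductive claim is the statement itself: $\textbf{n}$ contains no two adjacent entries that are both troughs, meaning both equal to $1$ and both at interior positions. For the base case, the tuples $(0)$ and $(1,1)$ have no interior entries at all, so there is nothing to check.

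For the inductive step, let $\textbf{n}=(n_1,\dots,n_k)$ satisfy the claim, and let $\textbf{n}'$ be obtained from $\textbf{n}$ by one strict blowup. All entries of $\textbf{n}$ are positive by the previous lemma. The key point is that a strict blowup inserts a new entry $1$ and adds $1$ to each neighbour of that new entry. So each neighbour of the new $1$ has value at least $2$ in $\textbf{n}'$, and in particular is not a trough. Every other entry of $\textbf{n}'$ is an entry of $\textbf{n}$ with the same value and the same neighbours in the same order.

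I will split into the two kinds of strict blowup.

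\emph{Middle blowup at position $s$.} Here $\textbf{n}'=(n_1,\dots,n_s+1,1,n_{s+1}+1,\dots,n_k)$. Suppose $\textbf{n}'$ had two adjacent troughs. Neither can be the new $1$, since both its neighbours are at least $2$. Neither can be one of the incremented entries $n_s+1$ or $n_{s+1}+1$, since these are at least $2$. So both troughs lie in the untouched block $n_1,\dots,n_{s-1}$ or in the untouched block $n_{s+2},\dots,n_k$. Such a pair was already an adjacent pair in $\textbf{n}$ with the same values. I must also check that interiority is preserved: an entry that is interior in $\textbf{n}'$ but lies in one of these blocks was also interior in $\textbf{n}$. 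This holds because the end positions $n_1$ and $n_k$ of $\textbf{n}$ remain the end positions of $\textbf{n}'$, or have been incremented. That gives a contradiction with the inductive hypothesis.

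\emph{End blowup.} Here $\textbf{n}'=(n_1,\dots,n_k+1,1)$. The new $1$ is at the right end, so it is not a trough. The entry $n_k+1$ is at least $2$, so it is not a trough. The remaining entries $n_1,\dots,n_{k-1}$ are unchanged. A trough among them that is interior in $\textbf{n}'$ is either interior in $\textbf{n}$, or is $n_{k-1}$ in the case where it was... — more precisely, $n_{k-1}$ was already interior in $\textbf{n}$ whenever $k\ge 3$. The small cases $k\le 2$ are checked directly. So any adjacent pair of troughs among these entries would already be an adjacent pair of troughs in $\textbf{n}$, which is a contradiction.

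The only real care needed is the bookkeeping of which positions count as interior ("not at the left or right end") after the tuple lengthens. In particular, when $n_k=1$ sits at the right end of $\textbf{n}$, an end blowup turns it into $2$ and it does not become an interior $1$. With that checked, there is no genuine obstacle, and the induction closes.
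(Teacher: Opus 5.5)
Your proof is correct and follows the same approach as the paper. Both argue by induction over the blowup sequence: the neighbours of the newly inserted $1$ are incremented to values at least $2$ (the only exception being $(0)\to(1,1)$), so no new pair of adjacent troughs can appear. Your extra bookkeeping about interior positions, in particular that a right-end $1$ becomes $2$ under an end blowup and so cannot become a trough, makes explicit a point the paper leaves implicit.
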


\begin{proof}
We show that a strict blowup will never produce a pair of adjacent troughs. Since $\textbf{n}$ is obtained from $(0)$ through a sequence of strict blowups, it follows that $\textbf{n}$ has no adjacent troughs.

In a middle blowup, the value of $n_s+1$ and $n_{s+1}+1$ in
$$(n_1,\dots,n_{s-1},n_s+1,1,n_{s+1}+1,n_{s+2},\dots,n_k)$$
is at least 2. In an end blowup, the value of $n_k+1$ in 
$$(n_1,\dots,n_{k-1},n_k+1,1)$$
is at least 2, unless the blowup is $(0)\rightarrow(1,1)$.

In all cases, no new pair of adjacent troughs is produced.
\end{proof}

Now, we know how each type of strict blowup affects the number of troughs. End blowups preserve the number of troughs. A shallow blowup increases the number of troughs by 1. And because of Lemma~\ref{lem:no adjacent troughs}, we know that deep blowups preserve the number of troughs.

\subsection{A tuple growing over time} Consider the sequence of tuples $(0)=\textbf{n}^1,\textbf{n}^2,\dots,\textbf{n}^k=\textbf{n}$, where each $\textbf{n}^{t+1}$ is a strict blowup of $\textbf{n}^t$. We think of $\textbf{n}^t$ as a tuple that is growing over time $t$. In each blowup, we insert a new entry into the tuple (the entry with value $1$ in the definition of blowup). An entry adjacent to it is ``the same entry'', with its value increased by $1$ when going from time $t$ to time $t+1$.

For example, consider the blowup $\textbf{n}^3=(1,2,1)\rightarrow\textbf{n}^4=(1,3,1,2)$. The leftmost 1 in $\textbf{n}^3$ and $\textbf{n}^4$ is the same entry. The 2 in $\textbf{n}^3$ and the 3 in $\textbf{n}^4$ is the same entry, with its value changed during the blowup. The second 1 in $\textbf{n}^4$ is a newly inserted entry that did not exist in time $t=3$. The rightmost 1 in $\textbf{n}^3$ is the same entry as the 2 in $\textbf{n}^4$, with its value changed during the blowup.

For each $t$, we label its troughs with labels $T_1,T_2,\dots$. The labels are defined with the following description. At time $0$, there is no trough, so there is no labels. Suppose we have labeled all the troughs for $\textbf{n}^t$, we label the troughs for $\textbf{n}^{t+1}$ according to the following rules:
\begin{enumerate}
\item If $\textbf{n}^{t+1}$ is obtained from $\textbf{n}^t$ through an end blowup, no new trough is introduced. Every trough in $\textbf{n}^{t+1}$ carries the same label as they had in $\textbf{n}^t$.
\item If $\textbf{n}^{t+1}$ is obtained from $\textbf{n}^t$ through a shallow blowup: Let $k\ge 0$ be the number of troughs that $\textbf{n}^t$ has. The newly inserted entry gets the label $T_{k+1}$. All troughs in $\textbf{n}^t$ are still troughs in $\textbf{n}^{t+1}$, and they carry the same label as they had in $\textbf{n}^t$.
\item If $\textbf{n}^{t+1}$ is obtained from $\textbf{n}^t$ through a deep blowup: By Lemma~\ref{lem:no adjacent troughs}, we know that the deep blowup was performed next to a unique trough $T_i$ in $\textbf{n}^t$ for some $i$. After the blowup, the entry that carried the label $T_i$ at time $t$ is no longer a trough, while the newly inserted entry at time $t+1$ is a trough. This newly inserted entry takes the label $T_i$ at time $t+1$, while the entry that had the label $T_i$ at time $t$ no longer has a label at time $t+1$ because it is no longer a trough. All other troughs carry the same label as they had in $\textbf{n}^t$.
\end{enumerate}

We think of entries that carry the same label across different time as ``the same trough''. A shallow blowup creates a new trough. Once a trough is created, it stays forever. A deep blowup next to a trough simply moves the trough to the new entry instead of destroying the trough.

After applying these labels, now every deep blowup can be written as ``deep blowup at left of trough $T_i$'' or ``deep blowup at right of trough $T_i$'' for some $i$, with left or right depending on whether the blowup is performed on the left side gap or the right side gap of the trough.

\begin{lem}\label{lem:deep swap}
Let $i\neq j$. If $\textbf{n}^{t+2}$ is obtained from $\textbf{n}^t$ by first performing a deep blowup next to $T_i$ and then a deep blowup next to $T_j$, then $\textbf{n}^{t+2}$ can also be obtained from $\textbf{n}^t$ by first performing a deep blowup next to $T_j$ and then a deep blowup next to $T_i$.
\end{lem}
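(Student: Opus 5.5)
The plan is a direct local computation. A deep blowup next to a trough $T_i$ changes only the trough entry and one neighbour. Concretely, a deep blowup at the left of $T_i$ sits in the gap between the entry $m$ just left of $T_i$ and $T_i$ itself. It replaces the consecutive pair $(m,1)$ by $(m+1,1,2)$: the new $1$ becomes the trough $T_i$, and the old trough entry becomes a $2$. Symmetrically, at the right it replaces $(1,m)$ by $(2,1,m+1)$. So each deep blowup is a local rewrite supported on the trough and one adjacent entry, and the task is to check that two such rewrites at distinct troughs $T_i\neq T_j$ commute.

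First I would fix notation for the positions of $T_i$ and $T_j$ in $\textbf{n}^t$, say $T_i$ at position $a$ and $T_j$ at position $b$. By Lemma~\ref{lem:no adjacent troughs} we have $|a-b|\ge 2$. The supports of the two rewrites are then $\{a,a\pm1\}$ and $\{b,b\pm1\}$.

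If these supports are disjoint, the two rewrites clearly commute. They act on disjoint windows of the tuple, and the only effect of one on the other is a shift of positions by one, which is exactly accounted for by the trough labels. After the first blowup, $T_j$ is still a trough at the corresponding (possibly shifted) entry, so ``deep blowup next to $T_j$'' still makes sense and is performed on the same side. The hypothesis that the second blowup is deep next to $T_j$ on a given side translates to the same side in $\textbf{n}^t$, since the first blowup did not alter $T_j$ or its neighbour on that side.

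The only remaining case, which I expect to be the main point, is when the supports overlap. This happens exactly when $b=a+2$ and both blowups use the shared entry $m$ between them: the first is at the right of $T_i$ and the second at the left of $T_j$, or the mirror situation. Here I would compute both orders explicitly. Starting from $(\dots,1,m,1,\dots)$, blowing up right of $T_i$ and then left of $T_j$ gives
$$(\dots,2,1,m+1,1,\dots)\rightarrow(\dots,2,1,m+2,1,2,\dots).$$
Doing the two blowups in the opposite order gives
$$(\dots,1,m+1,1,2,\dots)\rightarrow(\dots,2,1,m+2,1,2,\dots),$$
which is the same tuple. I would also confirm that in each intermediate step the relevant entry is still a trough labelled $T_j$ (respectively $T_i$), so the reversed sequence consists of genuine deep blowups at those troughs. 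With that check the lemma follows, and the resulting trough labels also agree.
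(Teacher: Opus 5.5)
Your proposal is correct and follows essentially the same route as the paper: when the two deep blowups act on separate parts of the tuple they trivially commute. The only interacting configuration is $(\dots,T_i,m,T_j,\dots)$ with blowups at the right of $T_i$ and the left of $T_j$, and there both orders yield $(\dots,2,T_i,m+2,T_j,2,\dots)$. You additionally spell out the support argument via Lemma~\ref{lem:no adjacent troughs} and the persistence of trough labels, which the paper leaves implicit.
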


\begin{proof}
Without loss of generality, we can assume that $T_i$ is on the left of $T_j$. When $T_i$ and $T_j$ are not close to each other, the two blowups have no influence on each other, so there is nothing to check. The only case needed to be checked is when $\textbf{n}^t$ is in the form $(\dots,T_i,x,T_j,\dots)$ for some $x\ge 2$, and we do deep blowups on the right side of $T_i$ and the left side of $T_j$. Doing deep blowup on the right side of $T_i$ first followed by doing deep blowup on the left side of $T_j$ produces $(\dots,2,T_i,x+2,T_j,2,\dots)$, but so is doing deep blowup on the left side of $T_j$ first followed by doing deep blowup on the right side of $T_i$. Therefore, they produce the same result.
\end{proof}

\begin{lem}\label{lem:shallow swap}
Let $i<j$. If $\textbf{n}^{t+2}$ is obtained from $\textbf{n}^t$ by first performing a shallow blowup to create trough $T_j$ and then performing a deep blowup next to $T_i$, then $\textbf{n}^{t+2}$ can also be obtained from $\textbf{n}^t$ by first performing a deep blowup next to $T_i$ and then performing a shallow blowup.
\end{lem}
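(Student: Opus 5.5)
We prove the lemma by a direct local computation, in the same way as Lemma~\ref{lem:deep swap}. Write $\textbf{n}^t=(n_1,\dots,n_k)$. The shallow blowup at position $s$ (with neither $n_s$ nor $n_{s+1}$ a trough) produces
$$\textbf{n}^{t+1}=(n_1,\dots,n_s+1,T_j,n_{s+1}+1,\dots,n_k).$$
Since $i<j$, the trough $T_i$ already exists in $\textbf{n}^t$ and sits at some position $r\neq s,s+1$. The deep blowup next to $T_i$ then inserts a new $1$ on the left or right of $T_i$ and increases its two neighbours by $1$.

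The plan is to split into cases according to whether the gap used by the deep blowup shares an entry with the gap $\{s,s+1\}$ used by the shallow blowup.

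\emph{Far case.} The two gaps share no entry, i.e. $T_i$ and the shallow gap are not within one entry of each other. Then the two operations modify disjoint sets of entries. We perform the deep blowup at the same side of $T_i$ in $\textbf{n}^t$, then the shallow blowup at the shifted position ($s$ or $s+1$). This gives the same tuple. We must also check that the second step is still shallow, i.e. not next to a trough. This holds because the entries adjacent to the shallow gap are unchanged, and the deep blowup only moves $T_i$ by one slot on its own side, away from the shallow gap.

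\emph{Adjacent case.} $T_i$ is adjacent to the shallow gap; by symmetry, say $\textbf{n}^t=(\dots,T_i,x,y,\dots)$ with the shallow blowup between $x$ and $y$, where $x\ge 2$ and $y\neq 1$ unless $y$ is the right end. Shallow first gives $(\dots,T_i,x+1,T_j,y+1,\dots)$. The deep blowup on the left of $T_i$ is again disjoint, and we argue as in the far case. The deep blowup on the right of $T_i$ gives
$$(\dots,2,T_i,x+2,T_j,y+1,\dots).$$
For the reverse order, first blow up on the right of $T_i$ to get $(\dots,2,T_i,x+1,y,\dots)$, then perform a shallow blowup between $x+1$ and $y$. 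This produces the same tuple.

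The main obstacle is this adjacent case. There we must confirm that the second step in the reversed order really is a shallow blowup: $x+1\ge 3$ is not a trough, and $y$ is unchanged, so it is not a trough either. We must also check that the resulting trough labels agree with those of the original sequence, so that $T_i$ and $T_j$ still denote the same entries. Both checks follow from the labelling rules and Lemma~\ref{lem:no adjacent troughs}.

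Finally, we handle the edge cases in which the shallow gap or $T_i$ sits near the ends of the tuple; these are covered by the same computation. Combining the cases gives the lemma.
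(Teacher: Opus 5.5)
Your proposal is correct and follows essentially the same argument as the paper: set aside the case where the two blowups act on disjoint entries, then, up to reflection, check only the configuration $(\dots,T_i,x,y,\dots)$ with the deep blowup on the right of $T_i$, reordered as $(\dots,T_i,x,y,\dots)\rightarrow(\dots,2,T_i,x+1,y,\dots)\rightarrow(\dots,2,T_i,x+2,T_j,y+1,\dots)$. Your extra checks (that the reordered second step is still shallow, and that the trough labels $T_i,T_j$ match) are details the paper leaves implicit.
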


\begin{proof}
Same as the above proof, there is nothing to check if the blowups are performed in different parts of the tuple. Without loss of generality, we can assume that $T_i$ is on the left of $T_j$. The only interesting cases are:
$$(\dots,T_i,x,y,\dots)\rightarrow(\dots,T_i,x+1,T_j,y+1,\dots)\rightarrow(\dots,2,T_i,x+2,T_j,y+1,\dots)$$
which can be replaced with
$$(\dots,T_i,x,y,\dots)\rightarrow(\dots,2,T_i,x+1,y,\dots)\rightarrow(\dots,2,T_i,x+2,T_j,y+1,\dots).$$
\end{proof}

\begin{lem}\label{lem:standard order}
If $\textbf{n}$ is obtained from $(0)$ through a sequence of strict blowups, then $\textbf{n}$ can be obtained from $(0)$ through a sequence of strict blowups in the following order:
\begin{enumerate}
\item a sequence of end blowups;
\item a shallow blowup to create trough $T_1$;
\item a sequence of deep blowups next to $T_1$;
\item a shallow blowup to create trough $T_2$;
\item a sequence of deep blowups next to $T_2$;
\item a shallow blowup to create trough $T_3$;
\item a sequence of deep blowups next to $T_3$;
\item[] etc$\dots$.
\end{enumerate}
Equivalently, it is saying that we can do all the end blowups first, then every time after we do a shallow blowup, we can perform all the deep blowups next to that newly created trough first before doing anything else.
\end{lem}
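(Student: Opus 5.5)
We would prove Lemma~\ref{lem:standard order} by a bubble-sort argument on the sequence of strict blowups, driven by the swapping lemmas proved above.

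\emph{Step 1: end blowups first.} By Lemma~\ref{lem:end blowups first}, we may assume the sequence consists of all end blowups, followed only by middle blowups. Each middle blowup is either shallow, creating a new trough $T_j$, or deep, next to some already existing trough $T_i$. In the middle part the shallow blowups occur in the order creating $T_1, T_2, \dots$, by the labelling rule. A deep blowup next to $T_i$ necessarily occurs after the shallow blowup creating $T_i$, since the trough must exist.

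\emph{Step 2: move deep blowups toward their creating shallow blowup.} We induct on the number of ``inversions''. An inversion is a pair consisting of a deep blowup next to $T_i$ and an earlier blowup, after the creation of $T_i$, that is either a shallow blowup creating $T_j$ with $j>i$ or a deep blowup next to $T_j$ with $j>i$. Pick an adjacent inverted pair: a blowup involving $T_j$ immediately followed by a deep blowup next to $T_i$, with $i<j$.
\begin{itemize}
\item If the first is shallow, apply Lemma~\ref{lem:shallow swap}.
\item If the first is deep, apply Lemma~\ref{lem:deep swap}.
\end{itemize}
Either swap produces the same tuple and strictly decreases the number of inversions. When no inversions remain, after the shallow blowup creating $T_i$ come exactly the deep blowups next to $T_i$, then the shallow blowup creating $T_{i+1}$, and so on. This is the claimed order.

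\emph{Main obstacle: the labels after a swap.} We must check that the swapped sequence is still a valid sequence of strict blowups with the same types and labels. Otherwise the inversion count is not well defined or could fail to decrease.
\begin{itemize}
\item In Lemma~\ref{lem:shallow swap}, after the swap the second move must still be a shallow blowup. Its new entry is placed at a position not adjacent to a trough, and it receives a label. Since it is now performed later, we must also verify it does not get adjacent to $T_i$'s new position.
\item The label the new trough receives must remain consistent. In particular, relabelling must preserve the relative order of the shallow blowups: the one creating $T_j$ gets the next available index, which still makes it $T_j$, because the shallow blowups themselves are not reordered.
\end{itemize}
We would verify this by rechecking the explicit local computations in the proofs of Lemmas~\ref{lem:deep swap} and \ref{lem:shallow swap}. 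This includes the mirror cases, where $T_i$ lies to the right of $T_j$, which the proofs treat as ``without loss of generality''. The check is that each trough label ends on the same entry, which is visible from the displayed tuples.
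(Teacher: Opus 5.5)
Your proposal is correct and follows the paper's route. The paper proves Lemma~\ref{lem:standard order} in one line by repeatedly applying the swaps of Lemmas~\ref{lem:end blowups first}, \ref{lem:deep swap}, and \ref{lem:shallow swap}. Your inversion count, together with your check that blowup types and trough labels are unchanged by each swap, is the bookkeeping that line leaves implicit. The existence of an adjacent inverted pair, which you use without comment, is immediate: your inversions are the ordinary inversions for the key ``index of the trough involved'', since a shallow blowup creating $T_i$ can never be the later member of an inversion.
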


\begin{proof}
Such a blowup order can be achieved by using Lemmas~\ref{lem:end blowups first}, \ref{lem:deep swap}, and \ref{lem:shallow swap} to swap the order of blowups.
\end{proof}

\section{Dual embedding and adjusted values}\label{sec:dual}

Since $Z_{p,q}$ is defined using $p\slash(p-q)$, while Theorem~\ref{thm:main} statement~\ref{it:combinatorial} is about $p\slash q$, we shall connect the two.

We borrow the concept of \textit{adjusted weight} from \cite{definite}. In \cite{definite}, in a graph where every vertex $v$ has a weight $w(v)$, the \textit{adjusted weight} of $v$ is $w(v)-d(v)$, where $d(v)$ is the degree of $v$.

Similarly, we define the concept of the \textit{adjusted value} of the entries of a $k$-tuple:

\begin{defn}
Let $\textbf{n}=(n_1,\dots,n_k)$ be a $k$-tuple. If $k\ge 2$, we define
$$adjusted(\textbf{n}):=(n_1-1,n_2-2,n_3-2,\dots,n_{k-1}-2,n_k-1).$$
If $k=1$, then $adjusted(\textbf{n}):=\textbf{n}$.

We call the entries in $adjusted(\textbf{n})$ \textit{adjusted values} of the corresponding entries in $\textbf{n}$.
\end{defn}

Under this language, a trough is an entry whose adjusted value is $-1$.

\begin{lem}[Equivalent to Lemma~3.5 in \cite{definite}]\label{lem:dual}
Let $p>q>0$ be coprime integers. Let $[c_1,\dots,c_{l_1}]^-$ be the Hirzebruch-Jung expansion of $p\slash q$, and $[d_1,\dots,d_{l_2}]^-$ be the Hirzebruch-Jung expansion of $p\slash(p-q)$.

Let $a_i\ge 0$, $b_i\ge 3$ be integers satisfying
$$(c_1,\dots,c_{l_1})=(2^{[a_0]},b_1,2^{[a_1]},\dots,b_k,2^{[a_k]}).$$
Then,
$$adjusted((d_1,\dots,d_{l_2}))=(a_0+1,0^{[b_1-3]},a_1+1,\dots,0^{[b_k-3]},a_k+1).$$
\end{lem}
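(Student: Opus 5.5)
We will derive the statement from the classical Riemenschneider point-diagram duality between the Hirzebruch-Jung expansions of $p/q$ and $p/(p-q)$. This is the content of Lemma~3.5 in \cite{definite}, which we could also cite directly after matching conventions. Recall the Riemenschneider picture: write row $i$ with $c_i-1$ dots, each row beginning in the column where the previous row ended. Then the column lengths, each plus one, are exactly $d_1,\dots,d_{l_2}$. Hence $l_2=\sum_i (c_i-1)-l_1+1$, and $d_j-1$ equals the number of dots in column $j$. Taking this duality as given, the proof is pure bookkeeping: we read the column lengths off the block structure $(2^{[a_0]},b_1,2^{[a_1]},\dots,b_k,2^{[a_k]})$.

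First we handle the runs of $2$'s. A row with $c_i=2$ contributes a single dot, placed in the column where the previous row ended, so it stacks vertically. A row with $c_i=b\ge 3$ contributes $b-1$ dots: its first dot shares a column with the preceding rows, then it advances through $b-2$ new columns. The last of these new columns is shared with the following rows.

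We then count the columns. There are $1+\sum_{m}(b_m-2)$ of them, giving $l_2 = 1+\sum_m (b_m-2)$. The first column contains the $a_0$ dots from the initial $2$'s together with the first dot of row $b_1$, so it has $a_0+1$ dots and $d_1=a_0+2$. The interior columns coming from row $b_m$ contain only one dot each; there are $b_m-3$ of them, and each gives $d=2$. The column shared between row $b_m$ and row $b_{m+1}$ contains the last dot of $b_m$, the $a_m$ dots of the $2$'s, and the first dot of $b_{m+1}$, so $d=a_m+3$. The final column contains the last dot of $b_k$ and the $a_k$ dots of the trailing $2$'s, so $d=a_k+2$. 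Subtracting $1$ at the two ends and $2$ in the interior gives exactly $(a_0+1,0^{[b_1-3]},a_1+1,\dots,0^{[b_k-3]},a_k+1)$.

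The main obstacle is degenerate cases. When $k=0$ we have $c=(2^{[a_0]})$, the dual is $(a_0+1)$, and the single-entry convention for $adjusted$ makes this match. When $k\ge1$ we must make sure the end columns are distinct, which holds because $l_2\ge 2$ whenever some $b_m\ge3$. Apart from these checks, the only delicate point is correctly stating and sourcing the Riemenschneider duality, for instance by citing \cite{definite} or by a short induction on $l_1$ that appends one entry and tracks the continued fraction.
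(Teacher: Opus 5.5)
Your argument is correct. It differs from the paper's in where the work is done. The paper's proof contains no computation: it observes that $adjusted((d_1,\dots,d_{l_2}))$ is the list of adjusted weights $w(v)-d(v)$ of the linear graph with weights $d_1,\dots,d_{l_2}$, and then quotes \cite[Lemma 3.5]{definite}. (The degrees here are $1$ at the ends, $2$ in the interior, and $0$ for a lone vertex.)

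You instead take Riemenschneider's point-diagram duality as the black box and read the column lengths off the block decomposition $(2^{[a_0]},b_1,2^{[a_1]},\dots,b_k,2^{[a_k]})$. Your bookkeeping checks out:
\begin{itemize}
\item the first column has $a_0+1$ dots and the last has $a_k+1$;
\item each column shared by the rows $b_m$ and $b_{m+1}$ has $a_m+2$ dots;
\item each $b_m$ contributes $b_m-3$ single-dot interior columns, so $l_2=1+\sum_m(b_m-2)$;
\item the separate $k=0$ check against the one-entry convention for $adjusted$ is handled.
\end{itemize}

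The citation route is a single line, but it leaves the matching of conventions to the reader, including the one-vertex case $k=0$. Your route makes visible why runs of $2$'s collapse into one column and why each $b_m\ge 3$ produces $b_m-3$ zeros. It is self-contained apart from a classical duality that you could re-prove by the induction you mention. The cost is that you are essentially reproducing the argument behind the cited lemma.
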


\begin{proof}
The adjusted values of $d_i$'s here correspond to the adjusted weights $d_i$ in \cite[Lemma 3.5]{definite}.
\end{proof}

\begin{rem}
The two expressions in Lemma~\ref{lem:dual} determine one another: the tuple $(c_1,\dots,c_{l_1})$ uniquely determines the coefficients $a_i$ and $b_i$, and the tuple $adjusted((d_1,\dots,d_{l_2}))$ uniquely determines them as well. Thus, Lemma~\ref{lem:dual} can be used to pass between these two tuples in either direction.
\end{rem}

\section{Proof of Theorem~\ref{thm:main}}\label{sec:main}

In this section, we prove Theorem~\ref{thm:main}. We first perform combinatorial analysis up to Lemma~\ref{lem:LR classification}, independent from the content in the previous two sections. Then, we combine it with the content of Section~\ref{sec:setup} to prove Lemma~\ref{lem:n covering}. In Lemmas~\ref{lem:case 1}, \ref{lem:case 2}, and \ref{lem:case 3}, we use the content of Section~\ref{sec:dual} to translate the results back to the statement of Theorem~\ref{thm:main}.

Recall that in the definition of $Z_{p,q}$ in Section~\ref{sec:lisca}, the entries $n_i$ and $b_i$ satisfy $0\le n_i\le b_i$. To mirror that combinatorial constraint, we make the definition below. We allow the reversed block $y_{l_2+1-i}$ because the definition of induced subgraph does not depend on the direction in which the plumbing graph is visually drawn. The use of $max(x_{j+i},0)$ rather than simply $x_{j+i}$ accounts for the fact that adjusted tuples arising from blowup sequences may have negative entries, namely the $-1$ entries corresponding to troughs. On the other hand, every $b_i$ in the Hirzebruch-Jung expansion is at least $2$, so $adjusted((b_1,\dots,b_l))$ has non-negative entries. Consequently, when $(x_1,\dots,x_l)=adjusted((b_1,\dots,b_l))$, the condition $max(x_{j+i},0)\ge y_i$ implies $x_{j+i}\ge y_i$.

\begin{defn}
Let $s_1=(x_1,\dots,x_{l_1})$ be an $l_1$-tuple of integers and $s_2=(y_1,\dots,y_{l_2})$ be an $l_2$-tuple of integers. We say that $s_1$ \textit{covers} $s_2$ if $l_1\ge l_2$ and there is a consecutive block of entries $x_{j+1},\dots,x_{j+l_2}$ in $s_1$ such that either $max(x_{j+i},0)\ge y_i$ for all $1\le i\le l_2$ or $max(x_{j+i},0)\ge y_{l_2+1-i}$ for all $1\le i\le l_2$.
\end{defn}

We prove that covering is transitive:

\begin{lem}\label{lem:cover transitive}
If $s_1$ covers $s_2$ and $s_2$ covers $s_3$, then $s_1$ covers $s_3$.
\end{lem}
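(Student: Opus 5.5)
The plan is to unwind the definition of covering and compose the two index maps. Write $s_1=(x_1,\dots,x_{l_1})$, $s_2=(y_1,\dots,y_{l_2})$ and $s_3=(z_1,\dots,z_{l_3})$. Since $s_1$ covers $s_2$, we have $l_1\ge l_2$, and there is an offset $j$ and an orientation $\epsilon\in\{+,-\}$ such that $\max(x_{j+i},0)\ge y_{\sigma(i)}$ for all $1\le i\le l_2$. Here $\sigma(i)=i$ if $\epsilon=+$ and $\sigma(i)=l_2+1-i$ if $\epsilon=-$. Likewise, since $s_2$ covers $s_3$, we have $l_2\ge l_3$, and there is an offset $j'$ and an orientation $\epsilon'$ with $\max(y_{j'+m},0)\ge z_{\tau(m)}$ for all $1\le m\le l_3$, where $\tau$ is either the identity or the reversal on $\{1,\dots,l_3\}$. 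Clearly $l_1\ge l_3$.

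The first key step is the monotonicity inequality
$$\max(x_{j+i},0)\ge \max(y_{\sigma(i)},0).$$
It holds because the left side is at least $y_{\sigma(i)}$ by hypothesis and also at least $0$. This is the only point where the $\max$ in the definition needs care: we need $x$ to dominate $\max(y,0)$, not just $y$, because the second covering compares against $\max(y,0)$.

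The second step is to locate the block in $s_1$ that covers $s_3$. Look at the positions $j'+1,\dots,j'+l_3$ in $s_2$, and pull them back to $s_1$ via $\sigma^{-1}$.
\begin{itemize}
\item If $\epsilon=+$, they correspond to positions $j+j'+m$ in $s_1$, for $m=1,\dots,l_3$. This is a consecutive block in the same order.
\item If $\epsilon=-$, position $j'+m$ of $s_2$ corresponds to position $j+l_2+1-j'-m$ of $s_1$. Setting $j''=j+l_2-j'-l_3$, the block $x_{j''+1},\dots,x_{j''+l_3}$ is consecutive but reversed: its $r$-th entry corresponds to $y_{j'+l_3+1-r}$.
\end{itemize}
In both cases the block lies inside $x_{j+1},\dots,x_{j+l_2}$, so the indices are valid.

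Finally, chain the inequalities. For each index we get $\max(x_{\cdot},0)\ge\max(y_{j'+m},0)\ge z_{\tau(m)}$. The composite orientation is forward if $\epsilon$ and $\epsilon'$ agree and reversed otherwise, so one of the two alternatives in the definition of covering holds for $s_1$ and $s_3$.

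I expect no real obstacle here. The only fiddly part is the index bookkeeping in the reversed case, which I would handle by checking the four orientation combinations, or by noting that reversal is an involution and reducing to two cases.
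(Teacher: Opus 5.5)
Your proposal is correct and follows essentially the same route as the paper: in both, the key step is the chain $\max(x_{\cdot},0)\ge\max(y_{\cdot},0)\ge z_{\cdot}$, which uses that the left side is also at least $0$. The only difference is cosmetic: the paper skips your reversed-case index bookkeeping by first reversing $s_2$ or $s_3$ so that both coverings use the forward alternative, after which the composed offset is simply $j_1+j_2$.
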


\begin{proof}
Reverse $s_2$ or $s_3$ if needed so that we use the ``$max(x_{j+i},0)\ge y_i$'' part of the definition in both $s_1$'s covering of $s_2$ and $s_2$'s covering of $s_3$.

Let $s_1=(x_1,\dots,x_{l_1})$, $s_2=(y_1,\dots,y_{l_2})$, and $s_3=(z_1,\dots,z_{l_3})$. First of all, $l_1\ge l_2$ and $l_2 \ge l_3$ implies $l_1\ge l_3$.

Let $j_1$ be such that $max(x_{j_1+i},0)\ge y_i$ for all $i$, and $j_2$ be such that $max(y_{j_2+i},0)\ge z_i$ for all $i$.

Then for all $i$, we have $max(x_{j_1+j_2+i},0)\ge y_{j_2+i}$, and also $max(x_{j_1+j_2+i},0)\ge 0$ by the definition of maximum. Therefore, we have 
$$max(x_{j_1+j_2+i},0)\ge max(y_{j_2+i},0)\ge z_i.$$
\end{proof}

\begin{defn}
A \textit{left-right sequence} is a (possibly empty) finite string of ``L''s and ``R''s. A left-right sequence associated with a sequence of deep blowups is the string of ``L''s and ``R''s recording whether the blowup happened at the left side or right side of the trough for each blowup.

We read a left-right sequence from left to right in chronological order. Thus $RRL$ means: first perform a deep blowup on the right of the trough, then another on the right, and finally one on the left.

Given a left-right sequence $S$, we define the tuple $apply(S)$ to be the adjusted value tuple obtained by applying the corresponding sequence deep blowups on the trough in the adjusted value tuple $(1,-1,1)$.
\end{defn}

For example, $apply()=(1,-1,1)$, $apply(R)=(1,0,-1,2)$, $apply(RRL)=(1,0,1,-1,0,3)$.

Note that $(1,-1,1)$ is the adjusted value tuple of $(2,1,2)$, $(1,0,-1,2)$ is the adjusted value tuple of $(2,2,1,3)$, and $(1,0,1,-1,0,3)$ is the adjusted value tuple of $(2,2,3,1,2,4)$. When performing blowup calculations, one could first convert an adjusted value tuple back to its original tuple, perform the blowup there, and then pass back to adjusted values. However, this is unnecessarily cumbersome. It is more convenient to work directly with adjusted value tuples: a middle blowup inserts an entry $-1$ and increases the adjacent entries by $1$, while an end blowup appends an entry $0$ at the right end and increases the adjacent entry by $1$.

\begin{lem}\label{lem:LR classification}
Let $S$ be a left-right sequence. Then, $apply(S)$ covers at least one of the following:
\begin{enumerate}
\item $(1,0^{[k]},k)$ for some $k\ge 1$;
\item $(1,k,0^{[k+1]},2)$ for some $k\ge 0$;
\item $(1,0^{[k_1]},1,0^{[k_2+1]},k_2,k_1+2)$ for some $k_1,k_2\ge 0$.
\end{enumerate}
\end{lem}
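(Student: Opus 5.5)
The plan is to compute $apply(S)$ explicitly in closed form and then read off one of the three patterns from it by a short case analysis. I will not argue by induction on the length of $S$, because covering is not obviously preserved when a deep blowup inserts an entry inside the covering block.

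\emph{The closed form.} Write $apply(S)$ as $(P,-1,Q)$, where $P$ is the part to the left of the trough and $Q$ the part to the right. Initially $P=(1)$ and $Q=(1)$. In adjusted values:
\begin{itemize}
\item an $R$ turns the old trough into $0$ and increases the first entry of $Q$ by $1$; that is, it appends a $0$ to the right end of $P$ and adds $1$ to $Q_1$;
\item symmetrically, an $L$ adds $1$ to the last entry of $P$ and prepends a $0$ to $Q$.
\end{itemize}
Write $S=L^{\alpha_0}RL^{\alpha_1}R\cdots RL^{\alpha_r}$ with $r$ the number of $R$'s. Then
\[
P=(1+\alpha_0,\alpha_1,\dots,\alpha_r).
\]
Dually, with $\beta_j$ recording the runs of $R$'s between consecutive $L$'s in reverse chronological order, $Q$ has the mirrored form: a string whose last entry is $1+(\text{number of }R\text{'s before the first }L)$. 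The check $apply(RRL)=(1,0,1,-1,0,3)$ will confirm the bookkeeping. In particular, the trough contributes $\max(-1,0)=0$ in the covering condition, so it behaves like a $0$.

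\emph{The case analysis.} I split according to the shape of $S$.
\begin{itemize}
\item If $S=R^m$, then $apply(S)=(1,0^{[m]},-1,m+1)$. This covers $(1,0^{[m+1]},m+1)$, which is pattern (1) with $k=m+1$. This includes $S$ empty, giving $(1,-1,1)$, which covers $(1,0,1)$.
\item If $S=L^m$, the mirror-image tuple is covered by the same pattern reversed, which the definition of covering allows.
\item If $S$ contains both letters, use the reversal symmetry to assume, say, that the last letter is $R$. Look at the final maximal block of $R$'s and the block of $L$'s preceding it. The final $R$-block of length $k+1$ produces $0^{[k+1]}$ on the left of the trough, and the trough itself supplies one more $0$. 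Meanwhile, the right side starts with an entry at least $2$ coming from the earlier $L$'s and then $R$'s. This is where pattern (2), $(1,k,0^{[k+1]},2)$, should come from: the entry $k$ on the left is produced by the $L$-run, and the leading $1$ is any entry that is at least $1$ further out, e.g.\ the leftmost entry of $P$, which is always at least $1$.
\item When the $L$-run is too short to produce the needed value $k$, I will step back one more alternation. The two nested runs should produce pattern (3): $(1,0^{[k_1]},1,0^{[k_2+1]},k_2,k_1+2)$, with $k_1$ and $k_2$ the lengths of the two relevant $R$-runs. The two large entries $k_2$ and $k_1+2$ accumulate on the right side.
\end{itemize}

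\emph{Expected obstacle.} The main difficulty is the last step: organizing the mixed case so that every $S$ falls into pattern (2) or pattern (3) with correctly matched parameters. The indices must line up exactly, since the run lengths determine both the number of zeros and the sizes of the large entries.

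To handle this, I will first try to restrict attention to the last two alternations of $S$. The entries produced by earlier letters lie farther from the trough and only help the covering, because they contribute nonnegative values at the outer ends. I will then verify patterns (2) and (3) by comparing the closed forms of $P$ and $Q$ entrywise against the target tuple, reversing the target when needed.
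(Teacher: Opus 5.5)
\textbf{Verdict: there is a genuine gap.} Your closed form for $apply(S)$ is correct, and the pure cases $S=R^m$ and $S=L^m$ are fine. But the mixed case, which is the entire content of the lemma, is only sketched, and the mechanism you propose for it fails.

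\textbf{The reduction to the last few runs is false.} You propose to look only at the last two or three runs, on the grounds that entries produced by earlier letters only help. Let $T$ be a suffix of $S$ that begins a new run. Then the block of $apply(S)$ corresponding to $apply(T)$ dominates $apply(T)$ entrywise except at one outermost entry. That entry is smaller by $1$, because it loses the $+1$ inherited from the initial $(1,-1,1)$. This entry is exactly what the coverings of a short word tend to use. Concretely:
\begin{itemize}
\item $apply(R^4LR^5)=(1,0,0,0,1,0^{[5]},-1,5,5)$ covers only pattern (3), and only via its last entry $5=k_1+2$.
\item In $apply(RL^2R^4LR^5)=(1,2,0,0,0,1,0^{[5]},-1,5,4,0,2)$, the block coming from $R^4LR^5$ is $(2,0,0,0,1,0^{[5]},-1,5,4)$. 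This block covers none of the three tuples.
\item Every covering in $apply(RL^2R^4LR^5)$ uses the final entry $2=x_1+1$ created by the first run; for example, $(4,0,2)$ covers $(1,0,1)$.
\end{itemize}
So ``stepping back one more alternation'' and expecting pattern (3) does not work.

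\textbf{Your pattern (2) matching misreads the definition.} Covering uses a consecutive block, so the ``$1$'' must sit immediately next to the ``$k$''. It cannot be ``any entry further out'' such as the leftmost entry of $P$. Your count is also off by one: a final $R$-run of length $k+1$ plus the trough gives $k+2$ slots before $Q_1$, not $k+1$.

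\textbf{The missing idea.} What makes the paper's case analysis finite is that entries the trough has already moved past are frozen; only the one adjacent to the trough can still grow. So $S$ should be read from its \emph{first} letters. In your notation, for $S=R^{x_1}L^{y_1}R^{x_2}\cdots$, the tuple begins $(1,0^{[x_1-1]},y_1,\dots)$ and ends $(\dots,x_2,0^{[y_1-1]},x_1+1)$, whatever comes later. These two ends already cover $(1,0^{[k]},k)$ unless $x_1=1$ or $y_1=1$.

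Pushing this a few runs further, through the cases $x_1=y_1=1$, $x_1=1<y_1$ and $y_1=1<x_1$, forces pattern (1) except for the short words $RL^{y}$, $RLR^{z}$ and $R^{x}LR^{z}$ (up to mirror). These are exactly where (2) and (3) are needed, and they use the boundary entries $2$ and $x+1$. Your closed form would make this argument clean. However, generically you should expect pattern (1), not (2) or (3), and the analysis has to be anchored at the beginning of $S$ rather than at its end.
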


\begin{proof}
We assume that for all $k\ge 1$, $apply(S)$ does not cover any of those tuples. We try to arrive at a contradiction.

If $S$ is empty, then $apply(S)$ is $(1,-1,1)$, which covers $(1,0,1)$. So, we know that $S$ is non-empty. By symmetry, we can assume that $S$ starts with $R$.

If $S$ does not have any $L$, then $apply(S)$ is $(1,0^{[a]},-1,a+1)$, where $a$ is the length of $S$. This covers $(1,0^{[k]},k)$ with $k=a+1$. So, $S$ has at least one $L$.

Let $x$ be the number of $R$'s before the first $L$.

If there are no more $R$'s after the first $L$, let $y$ be the number of $L$'s. Then, $apply(S)$ is $(1,0^{[x-1]},y,-1,0^{[y]},x+1)$. If $y\ge x-1$, then the $(1,0^{[x-1]},y)$ part covers $(1,0^{[k]},k)$ for $k=x-1$. If $y\le x$, then the $(y,-1,0^{[y]},x+1)$ part covers $(1,0^{[k]},k)$ for $k=y+1$.

Hence, there is some $R$ after the first $L$. So, $S$ starts with $R^{[x]}L^{[y]}R$ for some $x,y\ge 1$, where $R^{[x]}$ denotes $x$ copies of $R$ and $L^{[y]}$ denotes $y$ copies of $L$.

Note that $apply(R^{[x]}L^{[y]}R)=(1,0^{[x-1]},y,0,-1,1,0^{[y-1]},x+1)$.

Since the $-1$ is after the $(y,0)$ part, any further deep blowups cannot affect the $(1,0^{[x-1]},y)$ part. Hence, if $y\ge x-1$ and $x\ge 2$, $apply(S)$ will always cover $(1,0^{[k]},k)$ for $k=x-1$.

Similarly, any further deep blowups cannot affect the $(1,0^{[y-1]},x+1)$ part except for potentially increasing the value of the entry that is currently written as ``1''. Therefore, if $x+2\ge y\ge 2$, $apply(S)$ will always cover $(1,0^{[k]},k)$ for $k=y-1$.

Therefore, we must have $x=1$ or $y=1$.

\medskip\noindent\textbf{Case 1: }{$x=y=1$.}

For any $z\ge 1$, we have $apply(RLR^{[z]})=(1,1,0^{[z]},-1,z,2)$, which covers $(1,0^{[k_1]},1,0^{[k_2+1]},k_2,k_1+2)$ for $(k_1,k_2)=(0,z)$. Therefore, there exists some $L$ that appears after the first $L$. Let $z$ be the number of $R$ between the first and second $L$.

If there are no more $R$'s after the second $L$, then $S$ is in the form $RLR^{[z]}L^{[w]}$ for some $w\ge 1$. In that case, we have $apply(S)=(1,1,0^{[z-1]},w,-1,0^{[w]},z,2)$. If $w\ge z$, then the $(1,1,0^{[z-1]},w)$ part covers $(1,0^{[k]},k)$ for $k=z$. If $z-1\ge w$, then the $(w,-1,0^{[w]},z)$ part covers $(1,0^{[k]},k)$ for $k=w+1$. So, there exists some $R$ after the second $L$, and $S$ is in the form $RLR^{[z]}L^{[w]}R\dots$ for some $w\ge 1$.

Note that $apply(RLR^{[z]}L^{[w]}R)=(1,1,0^{[z-1]},w,0,-1,1,0^{[w-1]},z,2)$.

Since the $-1$ is after the $(w,0)$ part, any further deep blowups cannot affect the $(1,1,0^{[z-1]},w)$ part. Hence, if $w\ge z$, $apply(S)$ will always cover $(1,0^{[k]},k)$ for $k=w$.

Similarly, any further deep blowups cannot affect the $(1,0^{[w-1]},z)$ part except for potentially increasing the value of the entry that is currently written as ``1''. Therefore, if $z+1\ge w\ge 2$, $apply(S)$ will always cover $(1,0^{[k]},k)$ for $k=w-1$. If $w=1$, by considering the $(1,0^{[w-1]},z,2)$ part, we know that $apply(S)$ will always cover $(1,0^{[k]},k)$ for $k=1$.

Therefore, we arrive at a contradiction for all possibilities.

\medskip\noindent\textbf{Case 2: }{$x=1$ and $y\ge 2$.}

Note that $apply(RL^{[y]})=(1,y,-1,0^{[y]},2)$, which covers $(1,k,0^{[k+1]},2)$ for $k=y$. Therefore, there exists some $R$ that appears after those $y$ $L$'s.

If there are no more $L$'s afterwards, then $S$ is in the form $RL^{[y]}R^{[z]}$ for some $z\ge 1$. In that case, we have $apply(S)=(1,y,0^{[z]},-1,z,0^{[y-1]},2)$. If $y\ge z+1$, then the $(y,0^{[z]},-1,z)$ part covers $(1,0^{[k]},k)$ for $k=z+1$ (recall the ``$max(x_i,0)\ge y_{l_2+1-i}$'' part in the definition of covering). If $z+1\ge y$, then the $(z,0^{[y-1]},2)$ part covers $(1,0^{[k]},k)$ for $k=y-1$ (recall that $y\ge 2$ here). So, $S$ is in the form $RL^{[y]}R^{[z]}L\dots$ for some $z\ge 1$.

Note that $apply(RL^{[y]}R^{[z]}L)=(1,y,0^{[z-1]},1,-1,0,z,0^{[y-1]},2)$.

Since the $-1$ is after the $(y,0^{[z-1]},1)$ part, any further deep blowups cannot affect this part except potentially increasing the value of the final $1$. Hence, if $y\ge z-1$ and $z\ge 2$, $apply(S)$ will always cover $(1,0^{[k]},k)$ for $k=z-1$. If $z=1$, the $(1,y,0^{[z-1]},1)$ part covers $(1,0^{[k]},k)$ for $k=1$.

Similarly, any further deep blowups cannot affect the $(z,0^{[y-1]},2)$ part. Therefore, if $z+1\ge y$, $apply(S)$ will always cover $(1,0^{[k]},k)$ for $k=y-1$ (recall that $y\ge 2$ here).

Therefore, we arrive at a contradiction for all possibilities.

\medskip\noindent\textbf{Case 3: }{$x\ge 2$ and $y=1$.}

Note that $apply(R^{[x]}L)=(1,0^{[x-1]},1,-1,0,x+1)$, which covers $(1,0^{[k]},k)$ for $k=2$. Therefore, there exists some $R$ that appears after the $L$.

If there are no more $L$'s afterwards, then $S$ is in the form $R^{[x]}LR^{[z]}$ for some $z\ge 1$. In that case, we have $apply(S)=(1,0^{[x-1]},1,0^{[z]},-1,z,x+1)$, which covers $(1,0^{[k_1]},1,0^{[k_2+1]},k_2,k_1+2)$ for $(k_1,k_2)=(x-1,z-1)$.

So, $S$ is in the form $R^{[x]}LR^{[z]}L\dots$.

Let $w\ge 1$ be the number of consecutive $L$'s that follows $R^{[x]}LR^{[z]}$.

Note that $apply(R^{[x]}LR^{[z]}L^{[w]})=(1,0^{[x-1]},1,0^{[z-1]},w,-1,0^{[w]},z,x+1)$.

Since the $-1$ is behind the $w$, any subsequent deep blowups cannot affect the $(1,0^{[z-1]},w)$ part except for potentially increasing the value of the entry that currently has value $w$. If $w\ge z-1$, then $S$ always covers $(1,0^{[k]},k)$ for $k=z-1$. Therefore, we have $w+1<z$.

Hence, by considering $w,-1,0^{[w]},z$ part, we know that $apply(R^{[x]}LR^{[z]}L^{[w]})$ covers $(1,0^{[k]},k)$ for $k=w+1$. Therefore, $S$ is not $R^{[x]}LR^{[z]}L^{[w]}$. So, $S=R^{[x]}LR^{[z]}L^{[w]}R\dots$.

Note that $apply(R^{[x]}LR^{[z]}L^{[w]}R)=(1,0^{[x-1]},1,0^{[z-1]},w,0,-1,1,0^{[w-1]},z,x+1)$.

Since the $(1,0^{[w-1]},z,x+1)$ part is behind the $-1$, any subsequent deep blowups cannot affect this part except for potentially increasing the value of the entry that currently has value $1$. Since $w+1<z$, we have $w-1<z$. Hence, if $w\ge 2$, we arrive at a contradiction by noting that $S$ covers $(1,0^{[k]},k)$ for $k=w-1$. If $w=1$, then this part becomes $(1,z,x+1)$, which covers $(1,0^{[k]},k)$ for $k=1$.
\end{proof}

\begin{lem}\label{lem:n covering}
Let $\textbf{n}$ be obtained from $(0)$ through a sequence of strict blowup, with at least one of those strict blowups being a middle blowup. Then, $adjusted(\textbf{n})$ covers at least one of the following:
\begin{enumerate}
\item $(1,0^{[k]},k)$ for some $k\ge 1$;
\item $(1,k,0^{[k+1]},2)$ for some $k\ge 0$;
\item $(1,0^{[k_1]},1,0^{[k_2+1]},k_2,k_1+2)$ for some $k_1,k_2\ge 0$.
\end{enumerate}
\end{lem}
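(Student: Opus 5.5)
Using Lemma~\ref{lem:standard order}, I will reorder the given blowup sequence into the standard form: first a block of end blowups, then a shallow blowup creating $T_1$, then all deep blowups next to $T_1$, then a shallow blowup creating $T_2$, and so on. Since at least one middle blowup occurs, and the first middle blowup out of a trough-free tuple is necessarily shallow, the trough $T_1$ exists. The plan is to look only at the moment just after the last deep blowup next to $T_1$, and then argue that later blowups cannot destroy the covering.

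\emph{Step 1: the snapshot after $T_1$.} After the initial end blowups, the tuple is $(1,2^{[m]},1)$ for some $m\ge 0$ (or $(1,1)$), so its adjusted tuple is $(0^{[m+2]})$. A shallow blowup at some interior gap replaces a local block $(0,0)$ by $(1,-1,1)$. Here the end entries also have adjusted value $0$, and a blowup at an end gap acts on them the same way. Deep blowups next to $T_1$ then act on $(1,-1,1)$ exactly as in the definition of $apply$. They only touch the growing window, except that the two outer entries of the window absorb increases. So $adjusted(\textbf{n}^t)$ contains a consecutive block that entrywise dominates $apply(S)$, where $S$ is the left-right sequence of deep blowups at $T_1$. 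The surrounding zeros only matter through $\max(\cdot,0)$, and that is harmless.

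Lemma~\ref{lem:LR classification} then shows that this block covers one of the three listed families. By Lemma~\ref{lem:cover transitive}, $adjusted(\textbf{n}^t)$ covers it too; here I check that ``contains a block that is entrywise $\ge$'' implies covering.

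\emph{Step 2: monotonicity under later blowups.} This is the main obstacle. I need to show that if $adjusted(\textbf{m})$ covers a tuple $s$, then so does the adjusted tuple of any strict blowup of $\textbf{m}$. A middle blowup inserts a $-1$ between two entries and raises both neighbours by $1$. An end blowup raises the last entry by $1$ and appends a $0$. In both cases no existing entry decreases. The difficulty is that the inserted $-1$ may split the covering block.

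The fix I would try first is to use the structure of the standard order. All later blowups happen either at other troughs $T_j$, $j\ge 2$, created by shallow blowups, or inside their own windows. The block from Step 1 is itself a window around $T_1$. Inside it, the covering pattern in Lemma~\ref{lem:LR classification} is always located on a segment that the proof showed ``cannot be affected by further deep blowups''. It remains to rule out a later shallow blowup landing in the interior of that segment. If that happens, I would instead apply Step 1 to the new trough $T_j$, whose window is $(\ldots,1,-1,1,\ldots)$ with its own deep-blowup sequence, and invoke Lemma~\ref{lem:LR classification} again.

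So the cleanest formulation is to pick the \emph{last} shallow blowup $T_r$ in standard order and run Step 1 for $T_r$ rather than $T_1$. After the deep blowups at $T_r$ nothing else happens, so no monotonicity is needed. The window around $T_r$ starts from a local pattern $(u,v)$ with $u,v\ge 0$ after $\max(\cdot,0)$: the adjacent entries cannot be troughs, by Lemma~\ref{lem:no adjacent troughs} and the shallowness of the blowup. It becomes $(u+1,-1,v+1)$, which entrywise dominates $(1,-1,1)$, and the subsequent deep blowups give a block dominating $apply(S_r)$.

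The remaining check is the dominance claim: applying the same left-right sequence to $(u+1,-1,v+1)$ versus $(1,-1,1)$ preserves entrywise $\ge$. This holds because deep blowups add the same increments regardless of the starting values. With dominance in hand, Lemma~\ref{lem:LR classification} and Lemma~\ref{lem:cover transitive} finish the proof.
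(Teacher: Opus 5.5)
Your final formulation is correct and is essentially the paper's proof: pass to the standard order of Lemma~\ref{lem:standard order}, look at the last shallow blowup, note that everything after it is a deep blowup at that trough, so $adjusted(\textbf{n})$ contains a block $(x+a_1,a_2,\dots,a_{i-1},y+a_i)$ with $x,y\ge 0$ dominating $apply(S)$, and conclude by Lemma~\ref{lem:LR classification} and transitivity of covering. The $T_1$/monotonicity detour is unnecessary. You should state explicitly that the reordered sequence still contains a middle blowup, which holds because the swaps preserve blowup types, or because $\Sigma_i n_i=(2k-2)+\#\{\text{middle blowups}\}$ is determined by $\textbf{n}$.
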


\begin{proof}
Consider the rearranged blowup sequence given by Lemma~\ref{lem:standard order}. The original blowup sequence contains a middle blowup. In the rearrangement of Lemma~\ref{lem:standard order}, the order of blowups is changed only by the interchanges in Lemmas~\ref{lem:end blowups first}, \ref{lem:deep swap}, and \ref{lem:shallow swap}. These interchanges preserve whether a blowup is an end blowup or a middle blowup. Hence, the reordered sequence also contains a middle blowup.

The first middle blowup in the reordered sequence is a shallow blowup, because troughs are created only by shallow blowups, whereas a deep blowup can occur only next to an existing trough. Consider the part of the reordered sequence beginning with its final shallow blowup.

In terms of adjusted values, the shallow blowup gives
$$(\dots,x,y,\dots)\rightarrow(\dots,x+1,-1,y+1,\dots)$$
for some $x,y\ge 0$.

By the normal form in Lemma~\ref{lem:standard order}, every subsequent blowup is a deep blowup next to the trough created by this shallow blowup. Therefore, $adjusted(\textbf{n})$ is in the form
$$(\dots,x+a_1,a_2,\dots,a_{i-1},y+a_i,\dots)$$
for some $i$, where $(a_1,\dots,a_i)=apply(S)$, where $S$ is the left-right sequence associated with the remaining deep blowups.

The result follows from Lemma~\ref{lem:LR classification}.
\end{proof}

\begin{lem}\label{lem:1 not hold}
Let $L(p,q)$ be a lens space, and $[b_1,\dots,b_l]^-$ be the Hirzebruch-Jung expansion of $p\slash(p-q)$. If Theorem~\ref{thm:main} statement~\ref{it:min_filling} does not hold for $L(p,q)$, then $adjusted((b_1,\dots,b_l))$ covers at least one of the following:
\begin{enumerate}
\item $(1,0^{[k]},k)$ for some $k\ge 1$;
\item $(1,k,0^{[k+1]},2)$ for some $k\ge 0$;
\item $(1,0^{[k_1]},1,0^{[k_2+1]},k_2,k_1+2)$ for some $k_1,k_2\ge 0$.
\end{enumerate}
\end{lem}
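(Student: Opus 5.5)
The plan is to chain the earlier results: Lemma~\ref{lem:thm equiv 1} produces a tuple with a middle blowup, Lemma~\ref{lem:n covering} gives the covering for that tuple, and the entrywise inequality $n_i\le b_i$ transfers the covering to $adjusted((b_1,\dots,b_l))$ via Lemma~\ref{lem:cover transitive}.

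\emph{Step 1: extract a tuple with a middle blowup.} Suppose statement~\ref{it:min_filling} of Theorem~\ref{thm:main} fails. By Lemma~\ref{lem:thm equiv 1}, there is some $\textbf{n}=(n_1,\dots,n_l)\in Z_{p,q}$ obtained from $(0)$ by a sequence of strict blowups containing at least one middle blowup. By definition of $Z_{p,q}$, this tuple has the same length $l$ as the Hirzebruch-Jung expansion $[b_1,\dots,b_l]^-$ of $p\slash(p-q)$, and it satisfies $0\le n_i\le b_i$ for every $i$.

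\emph{Step 2: apply Lemma~\ref{lem:n covering}.} Since $\textbf{n}$ involves a middle blowup, Lemma~\ref{lem:n covering} shows that $adjusted(\textbf{n})$ covers one of the three listed families of tuples.

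\emph{Step 3: compare $adjusted((b_1,\dots,b_l))$ with $adjusted(\textbf{n})$.} I will show that $adjusted((b_1,\dots,b_l))$ covers $adjusted(\textbf{n})$, using the trivial block $j=0$ and the unreversed orientation. The adjustment subtracts the same amount (1 at the two ends, 2 in the interior, or nothing when $l=1$) from both tuples in each position. So $n_i\le b_i$ gives
\[
adjusted(\textbf{n})_i\le adjusted((b_1,\dots,b_l))_i .
\]
Since every $b_i\ge 2$, the right-hand side is non-negative. It therefore equals its own maximum with $0$, and the covering inequality holds entrywise. The lengths agree, so the length condition holds as well.

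\emph{Step 4: conclude by transitivity.} Lemma~\ref{lem:cover transitive} now gives that $adjusted((b_1,\dots,b_l))$ covers the same tuple from the list as in Step~2.

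The only point needing care is the bookkeeping in Step~3. The lengths must match, which is built into the definition of $Z_{p,q}$, and the $l=1$ case is handled by the separate convention for $adjusted$. In fact, when $l=1$ the only tuple is $(0)$, which has no middle blowups, so the hypothesis cannot occur and this case is vacuous. I do not expect a genuine obstacle; the substance is already contained in Lemma~\ref{lem:n covering}.
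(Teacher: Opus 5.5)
Your proposal is correct and matches the paper's proof: Lemma~\ref{lem:thm equiv 1} supplies $\textbf{n}\in Z_{p,q}$ with a middle blowup, Lemma~\ref{lem:n covering} gives the covering for $adjusted(\textbf{n})$, the bound $n_i\le b_i$ shows $adjusted((b_1,\dots,b_l))$ covers $adjusted(\textbf{n})$, and Lemma~\ref{lem:cover transitive} concludes. Your Step~3 spells out the entrywise comparison that the paper states in one line: equal lengths, equal shifts, and non-negativity of the adjusted $b_i$.
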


\begin{proof}
Suppose Theorem~\ref{thm:main} statement~\ref{it:min_filling} does not hold for $L(p,q)$.

Lemma~\ref{lem:thm equiv 1} implies that there is some $\textbf{n}\in Z_{p,q}$ such that a sequence of strict blowups that produces $\textbf{n}$ from $(0)$ involves at least one middle blowup. By Lemma~\ref{lem:n covering}, we know that $adjusted(\textbf{n})$ covers at least one item from that list. The definition of $Z_{p,q}$ implies that $(b_1,\dots,b_l)$ covers $\textbf{n}$. Therefore, $adjusted((b_1,\dots,b_l))$ covers $adjusted(\textbf{n})$. The result follows from Lemma~\ref{lem:cover transitive}.
\end{proof}

\begin{lem}\label{lem:3 consec}
Let $L=L(p,q)$ be a lens space, and $[b_1,\dots,b_l]^-$ be the Hirzebruch-Jung expansion of $p\slash(p-q)$. If $adjusted((b_1,\dots,b_l))$ consists of three consecutive non-zero entries, then $L$ contains at least one configuration in $B$ as an induced subgraph.
\end{lem}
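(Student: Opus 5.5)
We will translate via Lemma~\ref{lem:dual}. Write the Hirzebruch-Jung expansion of $p/q$ as $(c_1,\dots,c_{l_1})=(2^{[a_0]},b'_1,2^{[a_1]},\dots,b'_m,2^{[a_m]})$ with $a_i\ge 0$, $b'_i\ge 3$. Lemma~\ref{lem:dual} then gives
$$adjusted((b_1,\dots,b_l))=(a_0+1,0^{[b'_1-3]},a_1+1,\dots,0^{[b'_m-3]},a_m+1).$$
The nonzero entries of this tuple are exactly the entries $a_i+1$, since each $a_i+1\ge 1$. Two entries $a_{i-1}+1$ and $a_i+1$ are adjacent precisely when $b'_i=3$.

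Now suppose there are three consecutive nonzero entries. These are then $a_{i-1}+1,\,a_i+1,\,a_{i+1}+1$ for some $i$ with $1\le i\le m-1$, and they force $b'_i=b'_{i+1}=3$. The corresponding portion of the plumbing chain is therefore
$$-3,\ (-2)^{[a_i]},\ -3,$$
which consists of consecutive vertices of the linear graph. For a linear graph, a set of consecutive vertices spans an induced path, so this piece is exactly $B_{a_i}$ as an induced subgraph.

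The endpoints need a quick check. The first and last nonzero entries are $a_0+1$ and $a_m+1$, and they are flanked correctly because each $b'$ separates consecutive $a$-blocks. The middle of the three entries therefore always has both $b'_i$ and $b'_{i+1}$ present. The only real obstacle is this index bookkeeping: we must confirm that ``three consecutive'' forces exactly two intervening $b'$'s, each equal to $3$. This follows because the number of zeros between $a_{j-1}+1$ and $a_j+1$ is $b'_j-3$.
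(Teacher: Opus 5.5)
Your proof is correct and follows essentially the same route as the paper. Both use Lemma~\ref{lem:dual} to see that three consecutive nonzero adjusted entries force two $-3$ vertices separated by a chain of $(-2)$'s one shorter than the middle entry, which gives $B_{a_i}$ (the paper's $B_{a_2-1}$); the paper merely draws the larger induced subgraph including the flanking $(-2)$-chains before extracting the $B$-configuration.
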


\begin{proof}
Let $(a_1,a_2,a_3)$ be consecutive entries in $adjusted((b_1,\dots,b_l))$ such that $a_1,a_2,a_3>0$. By Lemma~\ref{lem:dual}, the canonical negative-definite linear plumbing graph associated to $L$ contains
$$\begin{tikzpicture}[xscale=1.0,yscale=1,baseline={(0,0)}]
    \node at (-2.1,0.4) {$-2$};
		\node at (-0.1,0.4) {$-2$};
    \node at (0.9,0.4) {$-3$};
    \node at (1.9,0.4) {$-2$};
		\node at (3.9,0.4) {$-2$};
		\node at (4.9,0.4) {$-3$};
		\node at (5.9,0.4) {$-2$};
		\node at (7.9,0.4) {$-2$};
    \node (A-2) at (-2,0) {$\bullet$};
    \node (A-1) at (-1,0) {$\cdots$};
    \node (A0) at (0,0) {$\bullet$};
    \node (A1) at (1,0) {$\bullet$};
    \node (A2) at (2,0) {$\bullet$};
		\node (A3) at (3,0) {$\cdots$};
		\node (A4) at (4,0) {$\bullet$};
		\node (A5) at (5,0) {$\bullet$};
		\node (A6) at (6,0) {$\bullet$};
		\node (A7) at (7,0) {$\cdots$};
		\node (A8) at (8,0) {$\bullet$};
    \path (A-2) edge [-] node [auto] {$\scriptstyle{}$} (A-1);
		\path (A-1) edge [-] node [auto] {$\scriptstyle{}$} (A0);
		\path (A0) edge [-] node [auto] {$\scriptstyle{}$} (A1);
    \path (A1) edge [-] node [auto] {$\scriptstyle{}$} (A2);
		\path (A2) edge [-] node [auto] {$\scriptstyle{}$} (A3);
		\path (A3) edge [-] node [auto] {$\scriptstyle{}$} (A4);
		\path (A4) edge [-] node [auto] {$\scriptstyle{}$} (A5);
		\path (A5) edge [-] node [auto] {$\scriptstyle{}$} (A6);
		\path (A6) edge [-] node [auto] {$\scriptstyle{}$} (A7);
		\path (A7) edge [-] node [auto] {$\scriptstyle{}$} (A8);
		\draw [decorate,decoration={brace,amplitude=5pt,mirror,raise=2ex}] (-2,0) -- (0,0) node[midway,yshift=-2em]{$a_1-1$};
		\draw [decorate,decoration={brace,amplitude=5pt,mirror,raise=2ex}] (2,0) -- (4,0) node[midway,yshift=-2em]{$a_2-1$};
		\draw [decorate,decoration={brace,amplitude=5pt,mirror,raise=2ex}] (6,0) -- (8,0) node[midway,yshift=-2em]{$a_3-1$};
  \end{tikzpicture}$$
as an induced subgraph. This contains $B_{a_2-1}$ as an induced subgraph.
\end{proof}

\begin{lem}\label{lem:case 1}
Let $L=L(p,q)$ be a lens space, and $[b_1,\dots,b_l]^-$ be the Hirzebruch-Jung expansion of $p\slash(p-q)$. If $adjusted((b_1,\dots,b_l))$ covers
$$(1,0^{[k]},k)$$
for some $k\ge 1$, then the canonical negative-definite linear plumbing graph associated to $L$ contains at least one configuration in $A\cup B\cup C$ as an induced subgraph.
\end{lem}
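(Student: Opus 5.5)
The idea is to pick a covering window of shortest length, so that its interior entries are forced to be zero. Then I will read the window back as part of the plumbing graph using Lemma~\ref{lem:dual}.

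Among all $k\ge 1$ and all windows realizing a covering of $(1,0^{[k]},k)$ by $adjusted((b_1,\dots,b_l))$, choose one with $k$ minimal. After reversing the tuple if necessary (induced subgraphs do not see orientation), there are consecutive entries $x_0,x_1,\dots,x_{k+1}$ with $x_0\ge 1$ and $x_{k+1}\ge k$. All entries are non-negative, since every $b_i\ge 2$.

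\emph{Reduction step.} Suppose $x_j\ge 1$ for some $1\le j\le k-1$. Then the window $x_j,\dots,x_{k+1}$ has first entry $\ge 1$ and last entry $\ge k\ge k-j$. So it covers $(1,0^{[k-j]},k-j)$ with $1\le k-j<k$, contradicting minimality. Hence $x_1=\dots=x_{k-1}=0$, and only $x_k$ is undetermined. Recall from Lemma~\ref{lem:dual} that the positive entries of the adjusted tuple are the numbers $a_i+1$, where $a_i$ counts a maximal run of $(-2)$-vertices. A maximal run of exactly $m$ zeros between two positive entries corresponds to a single vertex of weight $-(m+3)$. Two adjacent positive entries correspond to a $(-3)$-vertex between the two runs of $(-2)$'s.

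\emph{Case $x_k=0$.} The window is (positive, $0^{[k]}$, positive), and the zero run is maximal. The graph therefore contains a chain consisting of $x_0-1$ copies of $-2$, then $-(k+3)$, then $x_{k+1}-1\ge k-1$ copies of $-2$. Taking the $-(k+3)$ vertex followed by $k-1$ of the $(-2)$'s gives $A_{k-1}$, which is allowed since $k\ge1$.

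\emph{Case $x_k\ge 1$.} Now split according to $k$.
\begin{itemize}
\item If $k=1$, then $x_0,x_1,x_2$ are three consecutive positive entries, and Lemma~\ref{lem:3 consec} gives a configuration in $B$.
\item If $k=2$, the single zero $x_1$ is a maximal zero run, so the graph contains a $(-4)$-vertex, which is $A_0$.
\item If $k\ge 3$, the block $x_0,0^{[k-1]},x_k,x_{k+1}$ translates into the chain
$$\dots,\ -(k+2),\ (-2)^{[x_k-1]},\ -3,\ (-2)^{[x_{k+1}-1]},\dots$$
with $x_{k+1}-1\ge k-1$. This contains $C_{k-3,\,x_k-1}$ as an induced subgraph, because $C_{k',s}$ requires a $-(5+k')$ vertex, then $s$ twos, then $-3$, then $k'+2$ twos, and here $k'=k-3$.
\end{itemize}

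The main point requiring care is the translation via Lemma~\ref{lem:dual}. One must check that a zero run bounded on both sides by positive entries really is maximal. One must also check that the $(-2)$-runs in the graph have length exactly $(\text{entry})-1$, including when the window touches the end of the tuple, where the end entries $a_0+1$ and $a_k+1$ behave the same way. Apart from that, the argument is a finite case check.
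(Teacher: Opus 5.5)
Your proof is correct and follows the paper's argument: the paper also splits according to whether the entry just before the large end entry is zero, which gives $A_{u-1}$ from a $-(3+u)$-vertex, or positive, which gives $B$ via Lemma~\ref{lem:3 consec}, $A_0$ from a $(-4)$-vertex, or $C_{v-2,a_k-1}$, in each case reading the graph off via Lemma~\ref{lem:dual}. The only difference is bookkeeping. The paper keeps an arbitrary covering window and measures the zero runs $u$ and $v$ next to the right end, bounding them using $a_0\ge 1$; your minimal-$k$ choice instead forces the interior entries to vanish, so that $u=k$ or $v=k-1$ directly.
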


\begin{proof}
Since $b_1,\dots,b_l$ came from the Hirzebruch-Jung expansion, their adjusted values are all non-negative. So, $adjusted((b_1,\dots,b_l))$ contains a block of entries (possibly in reverse order) $(a_0,\dots,a_{k+1})$ such that $a_0\ge 1$, $a_{k+1}\ge k$, and $a_i\ge 0$ for all $i$.

Let $u$ be the number of consecutive $0$'s before $a_{k+1}$ in $(a_0,\dots,a_{k+1})$. (we set $u=0$ if $a_k$ is not $0$)

\medskip\noindent\textbf{Case 1: }{$u\ge 1$.}

According to Lemma~\ref{lem:dual}, and since $a_{k+1}\ge k$, we know that the canonical negative-definite linear plumbing graph associated to $L$ contains
$$\begin{tikzpicture}[xscale=1.0,yscale=1,baseline={(0,0)}]
    \node at (0.9,0.4) {$-(3+u)$};
    \node at (1.9,0.4) {$-2$};
		\node at (3.9,0.4) {$-2$};
    \node (A1) at (1,0) {$\bullet$};
    \node (A2) at (2,0) {$\bullet$};
		\node (A3) at (3,0) {$\cdots$};
		\node (A4) at (4,0) {$\bullet$};
    \path (A1) edge [-] node [auto] {$\scriptstyle{}$} (A2);
		\path (A2) edge [-] node [auto] {$\scriptstyle{}$} (A3);
		\path (A3) edge [-] node [auto] {$\scriptstyle{}$} (A4);
		\draw [decorate,decoration={brace,amplitude=5pt,mirror,raise=2ex}] (2,0) -- (4,0) node[midway,yshift=-2em]{$k-1$};
  \end{tikzpicture}$$
as an induced subgraph. Since $a_0\ge 1$, we know that $u\le k$, and therefore $k-1\ge u-1$. Hence, this contains $A_{u-1}$ as an induced subgraph.

\medskip\noindent\textbf{Case 2: }{$u=0$.}

In that case, $a_k>0$. Let $v$ be the number of consecutive $0$'s before $a_k$. If $v=0$, then Lemma~\ref{lem:3 consec} implies that the canonical negative-definite linear plumbing graph associated to $L$ contains at least one configuration in $B$ as an induced subgraph. If $v=1$, then Lemma~\ref{lem:dual} implies that the canonical negative-definite linear plumbing graph has a vertex with weight $-4$, and therefore contains $A_0$ as an induced subgraph. If $v\ge 2$, then the canonical negative-definite linear plumbing graph associated to $L$ contains
$$\begin{tikzpicture}[xscale=1.0,yscale=1,baseline={(0,0)}]
    \node at (0.9,0.4) {$-(3+v)$};
    \node at (1.9,0.4) {$-2$};
		\node at (3.9,0.4) {$-2$};
		\node at (4.9,0.4) {$-3$};
		\node at (5.9,0.4) {$-2$};
		\node at (7.9,0.4) {$-2$};
    \node (A1) at (1,0) {$\bullet$};
    \node (A2) at (2,0) {$\bullet$};
		\node (A3) at (3,0) {$\cdots$};
		\node (A4) at (4,0) {$\bullet$};
		\node (A5) at (5,0) {$\bullet$};
		\node (A6) at (6,0) {$\bullet$};
		\node (A7) at (7,0) {$\cdots$};
		\node (A8) at (8,0) {$\bullet$};
    \path (A1) edge [-] node [auto] {$\scriptstyle{}$} (A2);
		\path (A2) edge [-] node [auto] {$\scriptstyle{}$} (A3);
		\path (A3) edge [-] node [auto] {$\scriptstyle{}$} (A4);
		\path (A4) edge [-] node [auto] {$\scriptstyle{}$} (A5);
		\path (A5) edge [-] node [auto] {$\scriptstyle{}$} (A6);
		\path (A6) edge [-] node [auto] {$\scriptstyle{}$} (A7);
		\path (A7) edge [-] node [auto] {$\scriptstyle{}$} (A8);
		\draw [decorate,decoration={brace,amplitude=5pt,mirror,raise=2ex}] (2,0) -- (4,0) node[midway,yshift=-2em]{$a_k-1$};
		\draw [decorate,decoration={brace,amplitude=5pt,mirror,raise=2ex}] (6,0) -- (8,0) node[midway,yshift=-2em]{$k-1$};
  \end{tikzpicture}$$
as an induced subgraph. Since $a_0\ge 1$, we know that $v\le k-1$, and therefore $k-1\ge v-2+2$. Hence, this contains $C_{v-2,a_k-1}$ as an induced subgraph.
\end{proof}

\begin{lem}\label{lem:case 2}
Let $L=L(p,q)$ be a lens space, and $[b_1,\dots,b_l]^-$ be the Hirzebruch-Jung expansion of $p\slash(p-q)$. If $adjusted((b_1,\dots,b_l))$ covers
$$(1,k,0^{[k+1]},2)$$
for some $k\ge 0$, then the canonical negative-definite linear plumbing graph associated to $L$ contains at least one configuration in $A\cup B\cup C\cup D$ as an induced subgraph.
\end{lem}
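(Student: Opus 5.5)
The plan is to follow the template of Lemma~\ref{lem:case 1}. I will translate the covering hypothesis into a block of adjusted values and read it through Lemma~\ref{lem:dual}. Whenever possible I will reduce to Lemma~\ref{lem:case 1} or Lemma~\ref{lem:3 consec}, which already produce configurations in $A\cup B\cup C$.

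Since all adjusted values of the $b_i$ are non-negative, $adjusted((b_1,\dots,b_l))$ contains a block (possibly reversed) $(a_0,a_1,\dots,a_{k+3})$ with
$$a_0\ge 1,\qquad a_1\ge k,\qquad a_2,\dots,a_{k+2}\ge 0,\qquad a_{k+3}\ge 2.$$
The dictionary from Lemma~\ref{lem:dual} is the following:
\begin{itemize}
\item a positive adjusted entry $a$ gives a chain of $a-1$ vertices of weight $-2$;
\item two adjacent positive entries are separated by a $(-3)$-vertex;
\item a run of $m$ zeros between positive entries gives a single $-(3+m)$-vertex.
\end{itemize}

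The first reduction is this. If some $a_j$ with $2\le j\le k+2$ is positive, let $j$ be the last such index and set $u=k+2-j$. Then $(a_j,0^{[u]},a_{k+3})$ covers $(1,0^{[u]},u)$ as soon as $a_{k+3}\ge u$. When $u\ge 1$ this puts us in Lemma~\ref{lem:case 1} and we are done. When $u=0$, the entries $a_{k+2},a_{k+3}$ are adjacent positives, and if $a_{k+1}>0$ as well, Lemma~\ref{lem:3 consec} applies. In the remaining subcases I will similarly look leftwards from the first positive $a_j$ with $j\ge 2$. Using $a_1\ge k$, the block $(a_1,0^{[j-2]},a_j)$ read backwards covers $(1,0^{[m]},m)$ whenever the zero run is short relative to $a_1$. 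I expect that every configuration in which some middle entry is positive reduces to Lemma~\ref{lem:case 1} or Lemma~\ref{lem:3 consec} by such a length comparison; the only cost is bookkeeping of the inequalities.

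The genuinely new case is when $a_2=\dots=a_{k+2}=0$. Then the graph contains
$$-(4+k),\quad 2^{[a_{k+3}-1]}$$
on one side, with $2^{[a_1-1]}$ on the other side of the $-(4+k)$-vertex. Before that chain there is a $(-3)$-vertex coming from the adjacency of $a_0$ and $a_1$, preceded by $a_0-1$ further $(-2)$-vertices.
\begin{itemize}
\item If $a_1\ge k+1$, we get $A_k$.
\item If $a_1=k$, we read $-(4+k),2^{[k-1]},-3,2^{[a_0-1]}$. This is $C_{k-1,k-1}$ once $a_0-1\ge k+1$, with the left-hand count equal to $k-1$.
\item Otherwise $a_0$ is small, and I must look at the entry $a_{-1}$ before $a_0$. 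If $a_{-1}>0$, then $a_{-1},a_0,a_1$ are three consecutive positive entries and Lemma~\ref{lem:3 consec} gives a configuration in $B$. If $a_{-1}=0$, a zero run of length $r$ before $a_0$ produces a $-(3+r)$-vertex adjacent to the $a_0$-block. When $a_0=1$ this vertex is adjacent to the $-3$, and reversing the chain $-(3+r),-3,2^{[k-1]},-(4+k),2^{[\ge 1]}$ should match the shape of $D_{k',s}$ with $s=r+1-?$. If instead the run reaches the end of the tuple, the left end of the plumbing graph terminates there.
\end{itemize}
This last subcase, with $a_0$ small and the graph ending or a $D$-type vertex appearing, is the main obstacle. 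I will need to carefully match indices so that the resulting graph contains some $D_{k',s}$ (or falls back to $A$ or $C$ via Remark~\ref{rem:thm}). I will also need to check that, when the tuple simply ends, the end-of-tuple conventions in Lemma~\ref{lem:dual} (the $a_0+1$ and $a_k+1$ end entries) still force a forbidden subgraph. I would first try to handle the end case by observing that the first adjusted entry is $a_0+1\ge 1$ automatically, so an end directly after a small block still yields enough $(-2)$-vertices for a $C$ configuration.
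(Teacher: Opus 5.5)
\textbf{There is a genuine gap, and it is exactly the subcase you flag as the main obstacle:} $a_2=\dots=a_{k+2}=0$, $a_1=k$, $a_0\le k+1$. You leave it open and try to resolve it by looking at $a_{-1}$ and at end-of-tuple conventions. Neither is needed.

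You already noted that $a_{k+3}\ge 2$ puts at least one $(-2)$-vertex on the far side of the $-(k+4)$-vertex. Likewise, $a_0\ge 1$ and $a_1=k\ge 1$ put a $(-3)$-vertex on the near side. So whenever $a_1=k$ the plumbing graph contains the consecutive chain $-3,\,2^{[k-1]},\,-(k+4),\,-2$ as an induced subgraph. Read backwards, this is exactly $D_{k-1,0}$: in $D_{k,s}$ the end vertex has weight $-(2+s)$, which is $-2$ when $s=0$.

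This settles the case $a_1=k$ for every value of $a_0$. It also makes your $C_{k-1,k-1}$ subcase superfluous, and no information about entries outside the block is required. This is how the paper finishes: $a_1\ge k+1$ gives $A_k$, and $a_1=k$ gives $D_{k-1,0}$.

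\textbf{Two smaller points.} First, the reduction when some $a_j$ with $2\le j\le k+2$ is positive should use only your leftward comparison. The rightward step works only for $u\le 2$, since you know only $a_{k+3}\ge 2$. Take $j$ to be the \emph{first} such index.
\begin{itemize}
\item If $j=2$, then $a_0,a_1,a_2$ are three consecutive positive entries and Lemma~\ref{lem:3 consec} applies.
\item If $j\ge 3$, the zero run has length $j-2\le k\le a_1$. So the reversed block $(a_j,0^{[j-2]},a_1)$ covers $(1,0^{[j-2]},j-2)$, and Lemma~\ref{lem:case 1} applies.
\end{itemize}
This needs no hedging about ``short'' zero runs, but it does use $a_1\ge 1$.

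Second, you must therefore treat $k=0$ separately. There $(1,0,0^{[1]},2)$ is literally $(1,0^{[2]},2)$, so Lemma~\ref{lem:case 1} applies directly. As written, your analysis of the case $a_2=\dots=a_{k+2}=0$ breaks for $k=0$, $a_1=0$: there is no $(-3)$-vertex, and $2^{[a_1-1]}$ is meaningless.
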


\begin{proof}
If $k=0$, this corresponds to the $k=2$ case in Lemma~\ref{lem:case 1}. So, we let $k\ge 1$.

Let $(a_0,\dots,a_{k+3})$ be a block of entries in $adjusted((b_1,\dots,b_l))$ (possibly in reverse order) such that $a_0\ge 1$, $a_1\ge k$, and $a_{k+3}\ge 2$. Let $u$ be the number of consecutive $0$'s after $a_1$.

If $u=0$, then we can use Lemma~\ref{lem:3 consec}. If $1 \le u<k+1$, then this covers $(1,0^{[k']},k')$ for $k'=u$, and therefore we reduce it to Lemma~\ref{lem:case 1} by transitivity of covering.
Hence, the only case we need to check is when $a_2=\cdots=a_{k+2}=0$.

By Lemma~\ref{lem:dual}, the canonical negative-definite linear plumbing graph associated to $L$ contains

$$\begin{tikzpicture}[xscale=1.0,yscale=1,baseline={(0,0)}]
    \node at (0.9,0.4) {$-3$};
    \node at (1.9,0.4) {$-2$};
		\node at (3.9,0.4) {$-2$};
		\node at (4.9,0.4) {$-(k+4)$};
		\node at (5.9,0.4) {$-2$};
    \node (A1) at (1,0) {$\bullet$};
    \node (A2) at (2,0) {$\bullet$};
		\node (A3) at (3,0) {$\cdots$};
		\node (A4) at (4,0) {$\bullet$};
		\node (A5) at (5,0) {$\bullet$};
		\node (A6) at (6,0) {$\bullet$};
    \path (A1) edge [-] node [auto] {$\scriptstyle{}$} (A2);
		\path (A2) edge [-] node [auto] {$\scriptstyle{}$} (A3);
		\path (A3) edge [-] node [auto] {$\scriptstyle{}$} (A4);
		\path (A4) edge [-] node [auto] {$\scriptstyle{}$} (A5);
		\path (A5) edge [-] node [auto] {$\scriptstyle{}$} (A6);
		\draw [decorate,decoration={brace,amplitude=5pt,mirror,raise=2ex}] (2,0) -- (4,0) node[midway,yshift=-2em]{$a_1-1$};
  \end{tikzpicture}$$
as an induced subgraph. If $a_1\ge k+1$, this contains $A_k$. If $a_1=k$, this is $D_{k-1,0}$.
\end{proof}

\begin{lem}\label{lem:case 3}
Let $L=L(p,q)$ be a lens space, and $[b_1,\dots,b_l]^-$ be the Hirzebruch-Jung expansion of $p\slash(p-q)$. If $adjusted((b_1,\dots,b_l))$ covers
$$(1,0^{[k_1]},1,0^{[k_2+1]},k_2,k_1+2)$$
for some $k_1,k_2\ge 0$, then the canonical negative-definite linear plumbing graph associated to $L$ contains at least one configuration in $A\cup B\cup C\cup D$ as an induced subgraph.
\end{lem}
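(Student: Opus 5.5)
Following the pattern of Lemmas~\ref{lem:case 1} and \ref{lem:case 2}, take a block $(a_0,\dots,a_{k_1+k_2+4})$ of $adjusted((b_1,\dots,b_l))$, possibly reversed, with
$$a_0\ge 1,\quad a_{k_1+1}\ge 1,\quad a_{k_1+k_2+3}\ge k_2,\quad a_{k_1+k_2+4}\ge k_1+2,$$
and all other $a_i\ge 0$ (they are non-negative automatically, being adjusted values of Hirzebruch-Jung coefficients). The strategy is to reduce every non-extremal situation to Lemma~\ref{lem:case 1}, Lemma~\ref{lem:case 2} or Lemma~\ref{lem:3 consec} via transitivity of covering (Lemma~\ref{lem:cover transitive}). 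Only when all the ``$\ge 0$'' slots are actually $0$ and the named entries take specific values will we read off a configuration in $A\cup B\cup C\cup D$ directly using Lemma~\ref{lem:dual}.

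The reductions proceed in three steps.
\begin{enumerate}
\item Consider the segment $(a_{k_1+1},\dots,a_{k_1+k_2+3})$. If some zero slot $a_j$ with $k_1+2\le j\le k_1+k_2+2$ is positive, let $u$ be the length of the maximal run of zeros ending just before $a_{k_1+k_2+3}$. If $u\ge 1$, then $(a_j,0^{[u]},a_{k_1+k_2+3})$ covers $(1,0^{[u]},u)$ because $a_{k_1+k_2+3}\ge k_2\ge u$, so Lemma~\ref{lem:case 1} applies. If $u=0$ the run is empty, and we get either three consecutive nonzero entries (Lemma~\ref{lem:3 consec}) or, with the last entry $a_{k_1+k_2+4}\ge k_1+2\ge 2$, a covering of $(1,0^{[1]},1)$-type patterns. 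Hence we may assume $a_{k_1+2}=\dots=a_{k_1+k_2+2}=0$.
\item If $a_{k_1+k_2+3}\ge k_2+1$, the block $(a_{k_1+1},0^{[k_2+1]},a_{k_1+k_2+3})$ covers $(1,0^{[k_2+1]},k_2+1)$, again Lemma~\ref{lem:case 1}. So assume $a_{k_1+k_2+3}=k_2$. When $k_2=0$, the entries $a_{k_1+1}$ and $a_{k_1+4}$ with $a_{k_1+4}\ge 2$ should instead be handled via Lemma~\ref{lem:case 1} with $k=2$ or via Lemma~\ref{lem:case 2}, and this needs separate care.
\item Similarly, if a zero slot among $a_1,\dots,a_{k_1}$ is positive, run the same argument leftward from $a_{k_1+1}$ to produce a covering of some $(1,0^{[k]},k)$, or three consecutive nonzero entries. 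Also, if $a_{k_1+1}\ge 2$ the block $(a_{k_1+1},0^{[k_2+1]},k_2,\dots)$ should reduce. If $a_0$ is large we simply ignore it.
\end{enumerate}

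This leaves the extremal pattern
$$(1,0^{[k_1]},1,0^{[k_2+1]},k_2,a)$$
with $a\ge k_1+2$. Translating through Lemma~\ref{lem:dual}, a $0$-run of length $m$ between nonzero entries gives a vertex of weight $-(m+3)$, and an entry $c>0$ gives $c-1$ consecutive $(-2)$'s between such vertices. The $k_1$ zeros give a $-(k_1+3)$ vertex; the single $1$ gives adjacency with no $(-2)$'s in between; the $k_2+1$ zeros give a $-(k_2+4)$ vertex; then come $k_2-1$ vertices of weight $-2$, a $-3$, and $a-1\ge k_1+1$ vertices of weight $-2$. We must compare this graph with $D_{k,s}$, whose first vertex has weight $-(2+s)$, second $-(5+k)$, then $k$ vertices of weight $-2$, a $-3$, and $s$ vertices of weight $-2$. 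Matching $s=k_1+1$ and $k=k_2-1$ realizes $D_{k_2-1,k_1+1}$ when $k_2\ge1$. The left endpoint, coming from $a_0=1$, is harmless since only an induced subgraph is needed.

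The main obstacle is the exact bookkeeping in these boundary cases: the reversed-orientation option, the degenerate values $k_1=0$ and $k_2=0$, and confirming that every reduction really lands in Lemmas~\ref{lem:case 1}, \ref{lem:case 2} or \ref{lem:3 consec} rather than a pattern that is not covered. I would settle this by checking small values of $k_1$ and $k_2$ explicitly before writing the general case split.
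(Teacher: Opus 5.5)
Your step 3 contains a false reduction, and because of it the ``extremal pattern'' you finish with is too narrow. It is not true that a positive entry among $a_1,\dots,a_{k_1}$ always yields a covering of some $(1,0^{[k]},k)$ or three consecutive non-zero entries. The rightward argument in step 1 worked only because $a_{k_1+k_2+3}\ge k_2\ge u$ supplies a large endpoint. On the left, both $a_{k_1+1}=1$ and the first positive slot can equal $1$, so nothing forces such a covering.

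Here is a concrete counterexample. Take $k_1=3$, $k_2=4$ and the adjusted tuple $(1,0,0,1,1,0,0,0,0,0,4,5)$. It covers $(1,0^{[3]},1,0^{[5]},4,5)$ and has the positive left slot $a_3=1$. It also survives your steps 1 and 2 and the $a_{k_1+1}\ge 2$ check. Its positive entries sit at positions $0,3,4,10,11$ with values $1,1,1,4,5$. No two of them at distance $d\ge 2$ have the larger value $\ge d-1$, and no three consecutive entries are non-zero. One also checks that $(1,k,0^{[k+1]},2)$ is not covered. By Lemma~\ref{lem:dual} the plumbing graph is $(-5,-3,-8,-2,-2,-2,-3,-2,-2,-2,-2)$, and the configuration it contains is $D_{3,1}$, not the $D_{k_2-1,k_1+1}=D_{3,4}$ your last paragraph produces. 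So a positive left slot is a genuine case, not a reducible one.

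The missing idea is that the $D$-configuration only uses the zero run \emph{immediately} before $a_{k_1+1}$. Let $v$ be its length. Since $a_0\ge 1$, you have $v\le k_1$, and the entry just before that run is positive. Lemma~\ref{lem:dual} then gives a vertex of weight $-(3+v)$, adjacent to the $-(k_2+4)$ vertex because $a_{k_1+1}=1$. Your reading of the rest of the graph is correct and yields $D_{k_2-1,v+1}$, using only $v+1\le k_1+1\le a-1$ of the trailing $(-2)$'s. This is how the paper finishes, and it makes the first sentence of your step 3 unnecessary. Equivalently, the block starting at $a_{k_1-v}$ covers the same pattern with $k_1$ replaced by $v$, and there all left slots are zero.

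Three smaller points:
\begin{itemize}
\item The case $a_{k_1+1}\ge 2$ reduces to Lemma~\ref{lem:case 2} because the \emph{reversed} block $(a_{k_1+k_2+4},a_{k_1+k_2+3},0^{[k_2+1]},a_{k_1+1})$ covers $(1,k_2,0^{[k_2+1]},2)$.
\item Disposing of $k_2=0$ at the very start is cleaner: the block $(a_{k_1+1},0,0,a_{k_1+4})$ covers $(1,0^{[2]},2)$. After that, the $u=0$ case of step 1 is a direct application of Lemma~\ref{lem:3 consec}.
\item Your step 2, forcing $a_{k_1+k_2+3}=k_2$ via Lemma~\ref{lem:case 1}, is correct and is actually needed. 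It gives the exact count of $k_2-1$ vertices of weight $-2$ between the $-(k_2+4)$ and $-3$ vertices, which the paper's write-up leaves implicit.
\end{itemize}
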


\begin{proof}
If $k_2=0$, then this covers $(1,0^{[k]},k)$ for $k=2$, so we reduce it to Lemma~\ref{lem:case 1} by transitivity of covering. Hence, we assume that $k_2\ge 1$.

Let $(a_0,\dots,a_{k_1+k_2+4})$ be a block of entries in $adjusted((b_1,\dots,b_l))$ (possibly in reverse order) such that $a_0\ge 1$, $a_{k_1+1}\ge 1$, $a_{k_1+k_2+3}\ge k_2$, and $a_{k_1+k_2+4}\ge k_1+2$. Let $u$ be the number of consecutive $0$'s before $a_{k_1+k_2+3}$.

If $u=0$, then we can use Lemma~\ref{lem:3 consec}. If $1\le u\le k_2$, then this covers $(1,0^{[k]},k)$ for $k=u$, and therefore we reduce it to Lemma~\ref{lem:case 1} by transitivity of covering.

Hence, the only case we need to check is when $a_{k_1+2}=\cdots=a_{k_1+k_2+2}=0$.

If $a_{k_1+1}\ge 2$, then this covers $(1,k,0^{[k+1]},2)$ for $k=k_2$, and therefore we reduce it to Lemma~\ref{lem:case 2}.

Hence, we can assume that $a_{k_1+1}=1$.

Let $v$ be the number of consecutive $0$'s before $a_{k_1+1}$. By Lemma~\ref{lem:dual}, the canonical negative-definite linear plumbing graph associated to $L$ contains

$$\begin{tikzpicture}[xscale=1.0,yscale=1,baseline={(0,0)}]
    \node at (0.9,0.4) {$-(3+v)$};
    \node at (1.9,-0.4) {$-(4+k_2)$};
		\node at (2.9,0.4) {$-2$};
		\node at (4.9,0.4) {$-2$};
		\node at (5.9,0.4) {$-3$};
		\node at (6.9,0.4) {$-2$};
		\node at (8.9,0.4) {$-2$};
    \node (A1) at (1,0) {$\bullet$};
    \node (A2) at (2,0) {$\bullet$};
		\node (A3) at (3,0) {$\bullet$};
		\node (A4) at (4,0) {$\cdots$};
		\node (A5) at (5,0) {$\bullet$};
		\node (A6) at (6,0) {$\bullet$};
		\node (A7) at (7,0) {$\bullet$};
		\node (A8) at (8,0) {$\cdots$};
		\node (A9) at (9,0) {$\bullet$};
    \path (A1) edge [-] node [auto] {$\scriptstyle{}$} (A2);
		\path (A2) edge [-] node [auto] {$\scriptstyle{}$} (A3);
		\path (A3) edge [-] node [auto] {$\scriptstyle{}$} (A4);
		\path (A4) edge [-] node [auto] {$\scriptstyle{}$} (A5);
		\path (A5) edge [-] node [auto] {$\scriptstyle{}$} (A6);
		\path (A6) edge [-] node [auto] {$\scriptstyle{}$} (A7);
		\path (A7) edge [-] node [auto] {$\scriptstyle{}$} (A8);
		\path (A8) edge [-] node [auto] {$\scriptstyle{}$} (A9);
		\draw [decorate,decoration={brace,amplitude=5pt,mirror,raise=2ex}] (3,0) -- (5,0) node[midway,yshift=-2em]{$k_2-1$};
		\draw [decorate,decoration={brace,amplitude=5pt,mirror,raise=2ex}] (7,0) -- (9,0) node[midway,yshift=-2em]{$k_1+1$};
		\end{tikzpicture}$$
as an induced subgraph. Since $a_0\ge 1$, we have $v\le k_1$, and hence $v+1\le k_1+1$. Therefore, this contains $D_{k_2-1,v+1}$.
\end{proof}

\begin{lem}\label{lem:3 implies 1}
Let $L=L(p,q)$ be a lens space. Then, in Theorem~\ref{thm:main}, \ref{it:combinatorial} implies \ref{it:min_filling}.
\end{lem}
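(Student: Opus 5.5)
We argue by contraposition: assuming \ref{it:min_filling} fails for $L=L(p,q)$, we show \ref{it:combinatorial} fails. Let $[b_1,\dots,b_l]^-$ be the Hirzebruch-Jung expansion of $p\slash(p-q)$. By Lemma~\ref{lem:1 not hold}, $adjusted((b_1,\dots,b_l))$ covers a tuple of one of the three types: $(1,0^{[k]},k)$ with $k\ge 1$, $(1,k,0^{[k+1]},2)$ with $k\ge 0$, or $(1,0^{[k_1]},1,0^{[k_2+1]},k_2,k_1+2)$ with $k_1,k_2\ge 0$.

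We treat the three types separately, using the lemmas already proved. In the first case, Lemma~\ref{lem:case 1} shows that the canonical plumbing graph contains an induced subgraph in $A\cup B\cup C$. In the second case, Lemma~\ref{lem:case 2} gives one in $A\cup B\cup C\cup D$. In the third case, Lemma~\ref{lem:case 3} likewise gives one in $A\cup B\cup C\cup D$. In every case, then, the graph contains some configuration from $A\cup B\cup C\cup D$ as an induced subgraph.

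It remains to pass from $A\cup B\cup C\cup D$ to $A\cup B\cup C'\cup D'$, and this is the one step that needs checking. We use Remark~\ref{rem:thm}. If the configuration is $C_{k,s}$ with $k<s$, it contains $A_{k+1}$. If it is $D_{k,s}$ with $k+1<s$, it contains $C_{k,k}\in C'$. If it is $D_{k,2}$, it contains $A_0$, and if it is $D_{0,0}$, it contains $A_1$.

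Each of these containments should be verified directly from the pictures. For example, in $C_{k,s}$ with $s>k$, the vertex of weight $-(5+k)$ together with the first $k+1$ of its $s$ adjacent $(-2)$-vertices forms $A_{k+1}$. Since being an induced subgraph is transitive, the plumbing graph then contains a configuration in $A\cup B\cup C'\cup D'$, so \ref{it:combinatorial} fails. This proves that \ref{it:combinatorial} implies \ref{it:min_filling}.
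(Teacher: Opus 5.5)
Your proof is correct and matches the paper's argument: contraposition via Lemma~\ref{lem:1 not hold}, then Lemmas~\ref{lem:case 1}, \ref{lem:case 2}, \ref{lem:case 3} to obtain a configuration in $A\cup B\cup C\cup D$, then Remark~\ref{rem:thm} to pass to $A\cup B\cup C'\cup D'$. Your explicit check that $C_{k,s}$ with $s>k$ contains $A_{k+1}$ is a useful addition the paper leaves implicit. The remaining containments in the remark also check directly from the pictures: $D_{k,s}$ with $s\ge k+2$ contains $C_{k,k}$, $D_{k,2}$ contains $A_0$, and $D_{0,0}$ contains $A_1$.
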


\begin{proof}
In Theorem~\ref{thm:main}, if \ref{it:min_filling} is false, then Lemmas~\ref{lem:1 not hold}, \ref{lem:case 1}, \ref{lem:case 2}, and \ref{lem:case 3} imply that the canonical negative-definite linear plumbing graph associated to $L$ contains at least one configuration in $A\cup B\cup C\cup D$ as an induced subgraph. By Remark~\ref{rem:thm}, this is equivalent to the canonical negative-definite linear plumbing graph associated to $L$ contains at least one configuration in $A\cup B\cup C'\cup D'$ as an induced subgraph.
\end{proof}

It remains to prove \ref{it:min_filling}$\Rightarrow$\ref{it:combinatorial}.

\begin{lem}\label{lem:before 1 implies 3}
Let $L=L(p,q)$ be a lens space, and $[b_1,\dots,b_l]^-$ be the Hirzebruch-Jung expansion of $p\slash(p-q)$. If the canonical negative-definite linear plumbing graph associated to $L$ contains a configuration in $A\cup B\cup C\cup D$ as an induced subgraph, then $adjusted((b_1,\dots,b_l))$ covers at least one of
\begin{enumerate}
\item $(1,0^{[k]},k)$ for some $k\ge 1$;
\item $(1,k,0^{[k+1]},2)$ for some $k\ge 0$;
\item $(1,0^{[k_1]},1,0^{[k_2+1]},k_2,k_1+2)$ for some $k_1,k_2\ge 0$.
\end{enumerate}
\end{lem}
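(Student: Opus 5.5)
The plan is to reverse the translation used in Lemmas~\ref{lem:case 1}, \ref{lem:case 2} and \ref{lem:case 3}, using Lemma~\ref{lem:dual} in the direction plumbing graph $\to$ adjusted values.

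\textbf{Reductions.} By Remark~\ref{rem:thm}, every configuration in $C\cup D$ contains one in $A\cup B\cup C'\cup D'$ as an induced subgraph. By transitivity of containment, it therefore suffices to treat configurations in $A\cup B\cup C'\cup D'$. Each of these is a connected weighted path. An induced subgraph of the linear plumbing graph that is a connected path must be a consecutive block of vertices with exactly the prescribed weights, possibly read in reverse. Reversal is harmless because the definition of covering allows reversed blocks.

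\textbf{Translation.} Write the plumbing sequence as $(2^{[a_0]},b_1,2^{[a_1]},\dots,b_k,2^{[a_k]})$. Lemma~\ref{lem:dual} turns each vertex of weight $-b\le -3$ into a run $0^{[b-3]}$, and each maximal run of $a$ vertices of weight $-2$ into the single entry $a+1$. For a consecutive block equal to a configuration, the runs of $(-2)$'s strictly inside the block are maximal, so the corresponding adjusted entries are known exactly. A run of $(-2)$'s at either end of the block may extend beyond the block, so there we only get a lower bound; if the block ends in a vertex of weight $\le -3$, the adjacent adjusted entry is only known to be $\ge 1$. The resulting constraint has the form: $adjusted((b_1,\dots,b_l))$ contains a block of entries bounded below entrywise by an explicit tuple. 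The remaining step is to check that this tuple dominates one of the three listed families, possibly after reversal.

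\textbf{Case checks.}
\begin{itemize}
\item $A_k$: the block gives $(\ge 1,0^{[k+1]},\ge k+1)$. This covers $(1,0^{[k+1]},k+1)$, which is item (1).
\item $B_k$: the block gives $(\ge1,k+1,\ge1)$. This covers $(1,0,1)$, which is item (1) with $k=1$.
\item $C_{k,s}$: the block gives $(\ge 1,0^{[k+2]},s+1,\ge k+3)$. Reading it reversed and comparing with item (2) for $k'=k+1$, the zero run has the right length $k'+1=k+2$. The entry $s+1\ge k'$ holds precisely when $s\ge k$, which is exactly the defining condition of $C'$ and is a good sign that this is the intended matching.
\item $D_{k,s}$: the block gives $(\ge1,0^{[s-1]},\ 0^{[k+2]},\ k+1,\ \ge s+1)$, with the $0^{[s-1]}$ absent when $s=0$ and the entry $k+1$ present only when $k\ge 0$ run of $(-2)$'s is interpreted as above. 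The conditions $k+1\ge s$, $s\ne2$ and $(k,s)\ne(0,0)$ defining $D'$ should be exactly what is needed to match item (3). The first run should play the role of $0^{[k_1]}$ and the run from the $-(5+k)$ vertex the role of $0^{[k_2+1]}$; small values of $s$ may instead reduce to item (1) or item (2).
\end{itemize}

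\textbf{Expected main obstacle: the boundary entries.} The cases $C'$ and $D'$ are where I expect difficulty. At an end of the block adjacent to a weight $\le -3$ vertex, Lemma~\ref{lem:dual} only guarantees an adjusted entry $\ge 1$, whereas the patterns sometimes need $\ge 2$; for instance, the final $2$ in item (2) when matching $C_{k,s}$. My first attempt would be to choose the alignment that places the weakly bounded end entry on a pattern entry equal to $1$. If that fails, I would shift to a smaller pattern in the same family, or to item (3) with $k_2=0$, which only needs a $1$ there. As a fallback I would bypass covering altogether and construct the left-right sequence directly, that is, the shallow blowup followed by deep blowups. This amounts to inverting the analysis of Lemma~\ref{lem:LR classification} by hand for each of the four families.
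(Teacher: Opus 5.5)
Your $A_k$ and $B_k$ cases are correct and agree with the paper. The proof has a genuine gap in the $C$ and $D$ cases: you leave them unresolved, and the tentative matchings you propose there are wrong.

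\textbf{$C_{k,s}$.} Your block $(\ge 1,0^{[k+2]},s+1,\ge k+3)$ is right. Matching it against the reverse of item (2) with $k'=k+1$ fails for two reasons.
\begin{enumerate}
\item It needs the weakly bounded end entry to be $\ge 2$.
\item It needs $s\ge k$. You have misread $C'$, which is defined by $k\ge s$, not $s\ge k$. So for $C'$ with $k>s$ this matching fails outright.
\end{enumerate}
For $k=s$ it fails whenever the end entry equals $1$. For example, if the whole graph is $C_{0,0}$, the adjusted tuple is $(1,0,0,1,3)$, which does not cover $(1,1,0,0,2)$.

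The missing observation is that covering only asks $\max(x,0)\ge y$, so the entry $s+1$ can simply sit over a $0$. The block then covers $(1,0^{[k+3]},k+3)$, which is item (1), for every $k,s\ge 0$. No boundary issue arises.

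\textbf{$D_{k,s}$.} Your translation is incorrect in both subcases.
\begin{itemize}
\item When $s\ge1$, the vertices of weight $-(2+s)$ and $-(5+k)$ both have weight $\le -3$ and are adjacent. By Lemma~\ref{lem:dual}, the empty run of $(-2)$'s between them contributes an entry exactly $0+1=1$. The correct block is therefore $(\ge 1,0^{[s-1]},1,0^{[k+2]},k+1,\ge s+1)$. This is item (3) on the nose with $(k_1,k_2)=(s-1,k+1)$. Without that $1$, your block has the wrong length and shape; for $D_{1,1}$ the actual tuple $(1,1,0,0,0,2,2)$ covers no member of item (1) and genuinely needs item (3).
\item When $s=0$, the first vertex has weight $-2$. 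It does not give an ``absent $0^{[s-1]}$''; it ends a run of $(-2)$'s. The leading entry is therefore $\ge 2$, and the block $(\ge 2,0^{[k+2]},k+1,\ge 1)$ covers item (2) with $k'=k+1$ in reversed orientation.
\end{itemize}

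In all these cases the covering holds for every member of $C\cup D$. The defining conditions of $C'$ and $D'$ play no role; they only make the forbidden list minimal. So your reduction via Remark~\ref{rem:thm} is harmless but unnecessary, and your expectation that those conditions are ``exactly what is needed'' points in the wrong direction. As written, your final paragraph only lists untested fallbacks, so the $C$ and $D$ cases are not established.
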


\begin{proof}
Let $k\ge 0$.

If $L$ contains $A_k$, then by Lemma~\ref{lem:dual}, $adjusted((b_1,\dots,b_l))$ contains $(a_1,0^{[k+1]},a_2)$ (possibly in reverse order) for some $a_1\ge 1$ and $a_2\ge k+1$, which covers $(1,0^{[k']},k')$ for $k'=k+1$.

If $L$ contains $B_k$, then by Lemma~\ref{lem:dual}, $adjusted((b_1,\dots,b_l))$ contains $(a_1,a_2,a_3)$ (possibly in reverse order) for some $a_1,a_2,a_3\ge 1$, which covers $(1,0^{[k']},k')$ for $k'=1$.

Let $s\ge 0$.

If $L$ contains $C_{k,s}$, then by Lemma~\ref{lem:dual}, $adjusted((b_1,\dots,b_l))$ contains $(a_1,0^{[k+2]},s+1,a_2)$ for some $a_1\ge 1$ and $a_2\ge k+3$, which covers $(1,0^{[k']},k')$ for $k'=k+3$.

For $D_{k,s}$, we separate it into the $s=0$ case and the $s\ge 1$ case.

If $L$ contains $D_{k,0}$, then by Lemma~\ref{lem:dual}, $adjusted((b_1,\dots,b_l))$ contains $(a_1,0^{[k+2]},k+1,a_2)$ for some $a_1\ge 2$ and $a_2\ge 1$, which covers $(1,k',0^{[k'+1]},2)$ for $k'=k+1$.

If $L$ contains $D_{k,s}$ with $s\ge 1$, then by Lemma~\ref{lem:dual}, $adjusted((b_1,\dots,b_l))$ contains $(a_1,0^{[s-1]},1,0^{[k+2]},k+1,a_2)$ for some $a_1\ge 1$ and $a_2\ge s+1$, which covers $(1,0^{[k_1]},1,0^{[k_2+1]},k_2,k_1+2)$ for $(k_1,k_2)=(s-1,k+1)$.
\end{proof}

\begin{lem}\label{lem:1 implies 3}
Let $L=L(p,q)$ be a lens space. Then, in Theorem~\ref{thm:main}, \ref{it:min_filling} implies \ref{it:combinatorial}.
\end{lem}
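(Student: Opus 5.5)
We prove the contrapositive. Suppose the plumbing graph contains a configuration in $A\cup B\cup C'\cup D'\subset A\cup B\cup C\cup D$ as an induced subgraph. We will exhibit some $\textbf{n}\in Z_{p,q}$ whose blowup sequence from $(0)$ contains a middle blowup. Lemma~\ref{lem:thm equiv 1} then shows that \ref{it:min_filling} fails.

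By Lemma~\ref{lem:before 1 implies 3}, $adjusted((b_1,\dots,b_l))$ covers one of the three tuple types. It therefore suffices to produce, for each such tuple $\textbf{t}$, a tuple $\textbf{n}$ with the following two properties:
\begin{itemize}
\item[(a)] $\textbf{n}$ has the same length $l$ as $(b_1,\dots,b_l)$ and satisfies $n_i\le b_i$;
\item[(b)] $\textbf{n}$ is obtained from $(0)$ by strict blowups including a middle blowup.
\end{itemize}
The idea is to read the proof of Lemma~\ref{lem:LR classification} backwards. Each covered tuple arose there as essentially $apply(S)$ for an explicit left-right sequence $S$:
\begin{itemize}
\item $(1,0^{[k-1]},-1,k)=apply(R^{[k-1]})$, which dominates $(1,0^{[k]},k)$ after replacing the $-1$ by $0$;
\item $apply(RL^{[k]})=(1,k,-1,0^{[k]},2)$;
\item $apply(R^{[k_1+1]}LR^{[k_2+1]})=(1,0^{[k_1]},1,0^{[k_2+1]},-1,k_2+1,k_1+2)$, whose entries dominate the third type entrywise, taking $\max(\cdot,0)$ into account.
\end{itemize}
Some small cases differ slightly, for example $k_1=0$ and the $k=0$ case of type (2). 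These are handled by adjusting $S$ or by reducing to type (1), as in Lemmas~\ref{lem:case 2} and \ref{lem:case 3}.

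The construction of $\textbf{n}$ goes as follows. Suppose the block matching $\textbf{t}$ sits in positions $j+1,\dots,j+m$ of $(b_1,\dots,b_l)$; reverse the tuple if needed, which is harmless since $W_{p,q}$ for the reversed data corresponds to $L(p,q')$ with $qq'\equiv1$, or else we simply redo the argument with $L$ in place of $R$. Start from $(0)$ and perform end blowups until the tuple has length $j+2$; this gives adjusted values $(0,\dots,0)$. Then do a shallow blowup between positions $j+1$ and $j+2$, creating the adjusted pattern $(\dots,1,-1,1,\dots)$ at positions $j+1,j+2,j+3$. Next apply the deep blowups $S$; the block at positions $j+1,\dots,j+m$ then equals $apply(S)$. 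Finally, perform end blowups to reach length $l$. All end blowups commute past the middle ones by Lemma~\ref{lem:end blowups first}, so the order is legitimate, and the resulting $\textbf{n}$ involves a middle blowup. Since the lengths of $apply(S)$ and $\textbf{t}$ agree (both equal $m$), the positions line up.

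The main obstacle is checking $n_i\le b_i$ at every position, which amounts to $adjusted(\textbf{n})_i\le adjusted(b)_i$ entrywise. Off the block, $adjusted(\textbf{n})$ is $0$ except for the boundary effects. The end entries need care: the adjusted value for $k\ge2$ uses $-1$ at the ends rather than $-2$, so an end entry of the block gains $+1$ relative to an interior entry. We must verify that the block entries adjacent to the ends, or the last end-blown entry, do not exceed $b_i$. Inside the block we need $apply(S)_i\le adjusted(b)_i$; this holds because $adjusted(b)$ covers $\textbf{t}$, and $apply(S)$ exceeds $\textbf{t}$ only at the $-1$ entry, where $b$ is nonnegative anyway. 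The remaining risk is that $apply(S)$ is larger than $\textbf{t}$ somewhere, for instance $k_2+1$ versus $k_2$. If that happens, we would shorten $S$ by one $R$ and recheck, or invoke Corollary~\ref{cor:blowdown}-style explicit rational blowdowns for the configurations in $A\cup B\cup C'\cup D'$ as a fallback.
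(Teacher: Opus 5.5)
Your strategy is the paper's own: apply Lemma~\ref{lem:before 1 implies 3}, then build $\textbf{n}$ from end blowups, one shallow blowup in the $(j+1)$th gap, and a deep-blowup sequence $S$ chosen so that $\max(apply(S),0)$ equals the covered tuple. This makes $adjusted(\textbf{n})=(0^{[j]},apply(S),0^{[l-j-m]})$. Your choices $R^{[k-1]}$ for type (1) and $RL^{[k]}$ for type (2) match the paper's and are correct. The case $k=0$ of type (2) needs no separate treatment.

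\textbf{The gap is type (3).} Your sequence gives
$$apply(R^{[k_1+1]}LR^{[k_2+1]})=(1,0^{[k_1]},1,0^{[k_2+1]},-1,k_2+1,k_1+2),$$
which has length $k_1+k_2+6$. The target $(1,0^{[k_1]},1,0^{[k_2+1]},k_2,k_1+2)$ has length $k_1+k_2+5$, so your claim that the lengths agree is false for this $S$. If you place the block at position $j+1$:
\begin{itemize}
\item the entry $k_2+1$ sits over an entry of $adjusted((b_1,\dots,b_l))$ known only to be $\ge k_1+2$;
\item the final $k_1+2$ lands one position outside the covered block, where the entry may be $0$ or may not exist.
\end{itemize}
So $n_i\le b_i$ fails in general.

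\textbf{The fix.} Your hedge ``shorten $S$ by one $R$'' is exactly right, but you left it unverified. With $S=R^{[k_1+1]}LR^{[k_2]}$, which is the paper's choice,
$$apply(S)=(1,0^{[k_1]},1,0^{[k_2]},-1,k_2,k_1+2).$$
This has the correct length and equals the target once $-1$ is replaced by $0$. No small cases such as $k_1=0$ or $k_2=0$ need special handling.

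\textbf{Your fallback is circular.} Corollary~\ref{cor:blowdown} is deduced from Theorem~\ref{thm:main}, which depends on this lemma. The explicit Symington rational blowdowns in Remark~\ref{rem:final} would be a legitimate independent route. However, that route needs its own identification of each configuration, including the continued-fraction computation for $D_{k,s}$, and you do not carry it out.

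\textbf{Inequality direction.} You need $\max(apply(S),0)$ to be entrywise \emph{at most} the covered tuple, not to dominate it.

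\textbf{Boundary effects.} Your worry here is unfounded. $\textbf{n}$ and $(b_1,\dots,b_l)$ both have length $l\ge 2$, so $adjusted$ subtracts identical offsets from both. End blowups only append adjusted $0$'s and leave existing adjusted values unchanged. Hence $n_i\le b_i$ is exactly equivalent to $adjusted(\textbf{n})_i\le adjusted((b_1,\dots,b_l))_i$, with no end corrections.
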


\begin{proof}
Suppose \ref{it:combinatorial} does not hold. By Remark~\ref{rem:thm}, this is equivalent to the canonical negative-definite linear plumbing graph associated to $L$ contains a configuration in $A\cup B\cup C\cup D$ as an induced subgraph. We want to show that \ref{it:min_filling} does not hold. By Lemma~\ref{lem:thm equiv 1}, it suffices to show that there exists some $\textbf{n}\in Z_{p,q}$ obtained from a strict blowup process from $(0)$ that involves at least one middle blowup.

By Lemma~\ref{lem:before 1 implies 3}, $adjusted((b_1,\dots,b_l))$ covers at least one of
\begin{enumerate}
\item $(1,0^{[k]},k)$ for some $k\ge 1$;
\item $(1,k,0^{[k+1]},2)$ for some $k\ge 0$;
\item $(1,0^{[k_1]},1,0^{[k_2+1]},k_2,k_1+2)$ for some $k_1,k_2\ge 0$.
\end{enumerate}

By symmetry, without loss of generality, assume that the covering uses ``$max(x_{j+i},0)\ge y_i$'' part of the definition. Let $l$ be as in $(b_1,\dots,b_l)$, and $j$ be as in the definition of covering.

Note that
\begin{align*}
apply(R^{[x]})&=(1,0^{[x]},-1,x+1) \\
apply(RL^{[x]})&=(1,x,-1,0^{[x]},2) \\
apply(R^{[x]}LR^{[y]})&=(1,0^{[x-1]},1,0^{[y]},-1,y,x+1)
\end{align*}

If $adjusted((b_1,\dots,b_l))$ covers $(1,0^{[k]},k)$, we can create such $\textbf{n}$ by doing $(l-k-1)$ end blowups, then do a shallow blowup in the $(j+1)^\text{th}$ gap, then do perform deep blowups on the trough according to the sequence $R^{[k-1]}$.

If $adjusted((b_1,\dots,b_l))$ covers $(1,k,0^{[k+1]},2)$, we can create such $\textbf{n}$ by doing $(l-k-3)$ end blowups, then do a shallow blowup in the $(j+1)^\text{th}$ gap, then do perform deep blowups on the trough according to the sequence $RL^{[k]}$.

If $adjusted((b_1,\dots,b_l))$ covers $(1,0^{[k_1]},1,0^{[k_2+1]},k_2,k_1+2)$, we can create such $\textbf{n}$ by doing $(l-k_1-k_2-4)$ end blowups, then do a shallow blowup in the $(j+1)^\text{th}$ gap, then do perform deep blowups on the trough according to the sequence $R^{[k_1+1]}LR^{[k_2]}$.
\end{proof}

Theorem~\ref{thm:main} is proved by combining Corollary~\ref{cor:1 2 equiv}, Lemma~\ref{lem:3 implies 1}, and Lemma~\ref{lem:1 implies 3}.

\begin{rem}\label{rem:final}
Instead of doing Lemmas~\ref{lem:before 1 implies 3} and \ref{lem:1 implies 3}, the implication \ref{it:min_filling}$\Rightarrow$\ref{it:combinatorial} can alternatively be
argued geometrically. Suppose the canonical plumbing graph contains a configuration in $\mathcal A\cup\mathcal B\cup\mathcal C'\cup\mathcal D'$ as an induced subgraph. A symplectic rational blowdown, described below, produces a symplectic filling with smaller $b_2$.

The graph $A_k$ in this paper is the same as $C_{k+2}$ in \cite{Symington1998}. For $B_k$, symplectically resolving\footnote{See \cite[Lemma 2.6]{symplectichats} for an explicit construction of the symplectic resolution of positive intersection points.} all intersection points in the chain produces a symplectic $(-4)$-sphere. For $C_{k,s}$, symplectically resolving the $s+1$ intersection points in the chain of spheres between the $(-(5+k))$-sphere
and the $(-3)$-sphere produces a symplectic $(-(k+6))$-sphere. Together with the $k+2$ adjacent
$(-2)$-spheres, it forms $C_{k+4}$ in \cite{Symington1998}. Finally, a continued-fraction calculation identifies $D_{k,s}$ with $C_{ks+2k+3s+5,k+3}$ in \cite{Symington2001}.
\end{rem}

Finally, we prove Corollary~\ref{cor:infinite}.

\begin{proof}[Proof of Corollary~\ref{cor:infinite}]
Suppose there is another list of configurations $L$ such that not containing any configurations from the list as an induced subgraph is equivalent to the canonical negative-definite plumbing having minimum $b_2$ across all symplectic fillings. We argue that every configuration in $A\cup B\cup C'\cup D'$ must be in $L$.

Let $G$ be a configuration in $A\cup B\cup C'\cup D'$. Then, since $G$ contains itself as an induced subgraph, by Theorem~\ref{thm:main}, the lens space corresponding to $G$ has the property that the canonical negative-definite plumbing does not have minimum $b_2$ across all symplectic fillings. Therefore, $G$ contains some configuration $G_L$ in $L$ as an induced subgraph.

Since $G$ is a linear graph (i.e. path graph), $G_L$ must be a disjoint union of (possibly one) linear graphs. Let $k$ be the number of connected components of $G_L$, and let the $i^{th}$ component be in the form
$$\begin{tikzpicture}[xscale=1.0,yscale=1,baseline={(0,0)}]
    \node at (0.9,0.4) {$-a_{i,1}$};
    \node at (1.9,0.4) {$-a_{i,2}$};
		\node at (3.9,0.4) {$-a_{i,k_i}$};
    \node (A1) at (1,0) {$\bullet$};
    \node (A2) at (2,0) {$\bullet$};
		\node (A3) at (3,0) {$\cdots$};
		\node (A4) at (4,0) {$\bullet$};
    \path (A1) edge [-] node [auto] {$\scriptstyle{}$} (A2);
		\path (A2) edge [-] node [auto] {$\scriptstyle{}$} (A3);
		\path (A3) edge [-] node [auto] {$\scriptstyle{}$} (A4);
  \end{tikzpicture}$$
for each $1\le i\le k$. Let $x$ be an integer much bigger than the total number of vertices $G_L$ has. Consider the linear graph formed by connecting every $i^{th}$ and $(i+1)^{th}$ components together through an added vertex of weight $-x$:
$$\begin{tikzpicture}[xscale=1.0,yscale=1,baseline={(0,0)}]
    \node at (0.9,0.4) {$-a_{1,1}$};
		\node at (2.9,0.4) {$-a_{1,k_1}$};
		\node at (3.9,0.4) {$-x$};
    \node at (4.9,0.4) {$-a_{2,1}$};
		\node at (6.9,0.4) {$-a_{2,k_2}$};
		\node at (7.9,0.4) {$-x$};
		\node at (9.9,0.4) {$-x$};
    \node at (10.9,0.4) {$-a_{k,1}$};
		\node at (12.9,0.4) {$-a_{k,k_k}$};
    \node (A1) at (1,0) {$\bullet$};
    \node (A2) at (2,0) {$\cdots$};
		\node (A3) at (3,0) {$\bullet$};
		\node (A4) at (4,0) {$\bullet$};
		\node (A5) at (5,0) {$\bullet$};
    \node (A6) at (6,0) {$\cdots$};
		\node (A7) at (7,0) {$\bullet$};
		\node (A8) at (8,0) {$\bullet$};
		\node (A9) at (9,0) {$\cdots$};
		\node (A10) at (10,0) {$\bullet$};
    \node (A11) at (11,0) {$\bullet$};
		\node (A12) at (12,0) {$\cdots$};
    \node (A13) at (13,0) {$\bullet$};
    \path (A1) edge [-] node [auto] {$\scriptstyle{}$} (A2);
		\path (A2) edge [-] node [auto] {$\scriptstyle{}$} (A3);
		\path (A3) edge [-] node [auto] {$\scriptstyle{}$} (A4);
    \path (A4) edge [-] node [auto] {$\scriptstyle{}$} (A5);
		\path (A5) edge [-] node [auto] {$\scriptstyle{}$} (A6);
		\path (A6) edge [-] node [auto] {$\scriptstyle{}$} (A7);
    \path (A7) edge [-] node [auto] {$\scriptstyle{}$} (A8);
		\path (A8) edge [-] node [auto] {$\scriptstyle{}$} (A9);
		\path (A9) edge [-] node [auto] {$\scriptstyle{}$} (A10);
    \path (A10) edge [-] node [auto] {$\scriptstyle{}$} (A11);
    \path (A11) edge [-] node [auto] {$\scriptstyle{}$} (A12);
    \path (A12) edge [-] node [auto] {$\scriptstyle{}$} (A13);
  \end{tikzpicture}.$$

Since this contains $G_L$ as an induced subgraph, the lens space corresponding to this linear graph has the property that the canonical negative-definite plumbing does not have minimum $b_2$ across all symplectic fillings. Therefore, it contains some configuration $G'$ in $A\cup B\cup C'\cup D'$ as an induced subgraph. Observe that in every configuration in $A\cup B\cup C'\cup D'$, the biggest value of the absolute value of a vertex's weight is at most 3 more than the number of vertices. Since taking subgraphs does not increase the number of vertices, by choosing a big enough $x$, we know that $G'$ cannot contain a vertex of weight $-x$. Hence, $G'$ is an induced subgraph of a component of $G_L$. Therefore, $G'$ is an induced subgraph of $G_L$.

Since $G'$ is an induced subgraph of $G_L$, and $G_L$ is an induced subgraph of $G$, it follows that $G'$ is an induced subgraph of $G$. Since the list $A\cup B\cup C'\cup D'$ is minimal, we must have $G=G'$. Hence, we have $G=G_L=G'$. Therefore, $G=G_L\in L$.
\end{proof}

\bibliographystyle{alpha}
\def\MR#1{}
\bibliography{bib}

\end{document}